\newcommand{\stkout}[1]{\ifmmode\text{\sout{\ensuremath{#1}}}\else\sout{#1}\fi}
\newtheorem{theorem}{Theorem}[section]
\newtheorem{remark}[theorem]{Remark}
\newtheorem{assumption}[theorem]{Assumption}
\newtheorem{lemma}[theorem]{Lemma}
\newtheorem{prop}[theorem]{Proposition}
\newtheorem{definition}[theorem]{Definition}
\newcommand{\cp}[2]{\langle#1,#2\rangle}
\newcommand{\ds}{\displaystyle}
\newcommand{\tr}{\mathop{\mathrm{Tr}}\nolimits}
\def \R{\mathbb{R}}
\def\erre{\mathbb{R}}
\definecolor{red}{rgb}{1.0,0.0,0.0}
\definecolor{blu}{rgb}{0.0,0.0,1.0}
\definecolor{gre}{rgb}{0.03,0.50,0.03}
\title[Stochastic optimal control problems with delays in the state and in the control]{Stochastic optimal control problems with delays in the state and in the control via viscosity solutions  and applications to optimal advertising and optimal 
investment problems}
\date{\today}
\author[De Feo]{Filippo De Feo}
\address{F.~De Feo: Department of Economics and Finance,  Luiss Guido Carli University, Rome 00197, Italy and Department of Mathematics, Politecnico di Milano, Piazza Leonardo da Vinci 32, 20133 Milano, Italy}
\email{\href{mailto:filippo.defeo@polimi.it}{fdefeo@luiss.it, filippo.defeo@polimi.it}}
\numberwithin{equation}{section}
\begin{document}

\begin{abstract}
In this manuscript we consider optimal control problems of stochastic differential equations with delays in the state and in the control. First, we prove an equivalent Markovian reformulation on Hilbert spaces of the state equation. Then, using the dynamic programming approach for infinite-dimensional systems, we  prove that the value function is the unique viscosity solution of the infinite-dimensional Hamilton-Jacobi-Bellman equation. We apply these results  to problems coming from economics:  stochastic optimal advertising problems  and stochastic optimal  investment problems with time-to-build.
\end{abstract}

\maketitle

\section{Introduction}
In this manuscript, we consider a class of stochastic optimal control problems with infinite horizon with delays in the state and in the control. Precisely, the state equation is a stochastic delay differential equation (SDDE) in $\mathbb R^n$ of the form
\begin{equation}\label{eq:SDDE_linear_intro}
\begin{cases}
dy(t) = \ds \left[a_0 y(t)+b_0 (u(t) )+ \int_{-d}^0 a_1(\xi)y(t+\xi)\,d\xi +
            \int_{-d}^0p_1(\xi)u(t+\xi)\,d\xi\right]dt 
\ds \\ \ \ \ \ \ \ \ \ \ \ \ + \sigma_0(u(t))\, dW(t), \quad t \geq 0, \\[10pt]
y(0)=\eta_0, \quad y(\xi)=\eta_1(\xi),\;
u(\xi)=\delta(\xi),\;\; \xi\in[-d,0),
\end{cases}
\end{equation}
where  $u(t)$ is the control process taking values in a bounded subset $U$ of $\mathbb R^p$,  $\eta_0 \in \mathbb{R}^n,$ $\eta_1 \in L^2([-d,0];\mathbb R^n)$  are the initial conditions of  $y(t)$,  $\delta \in L^2([-d,0); U)$ is the initial condition of $u(t)$, $W(t)$ is a Brownian motion with values in $\mathbb R^q$.
Denoting $\eta=(\eta_0,\eta_1)$, the goal is to minimize, over all $u(\cdot )\in \mathcal U$, a functional of the
form
\begin{equation*}
\mathcal J(\eta,\delta;u(\cdot)) =
\mathbb E\left[\int_0^{\infty} e^{-\rho t}
l(y(t),u(t)) dt \right].
\end{equation*}
The presence of delays is the crucial aspect of  (SDDE): these appear linearly via the integral terms
$$\int_{-d}^0 a_1(\xi)y(t+\xi)\,d\xi +
            \int_{-d}^0p_1(\xi)u(t+\xi)\,d\xi,$$
 where the first represents the delay in the state and the second the one in the control.
 For a complete picture of optimal control problems with delays we refer the reader to \cite{defeo_federico_swiech}, while here we will only recall some results.
 A similar problem with delays only in the state was treated by means of the dynamic programming method via viscosity solutions in \cite{defeo_federico_swiech}. In this paper we aim to extend some of  these results to the case of delays also in the control.

The main difficulty for delay problems is in the lack of Markovianity, which prevents a direct application of the dynamic programming method, e.g. see \cite{defeo_federico_swiech}. The approach we follow here, similarly to \cite{defeo_federico_swiech}, is to lift the state equation to an infinite-dimensional Banach or Hilbert space (depending on the regularity of the data), in order to regain Markovianity.\footnote{For the procedure of  rewriting deterministic delay differential equations, we refer the readers to \cite[Part II, Chapter 4]{delfour}. For the stochastic case, one may consult \cite{choj78, DZ96, gozzi_marinelli_2004, gozzi_marinelli_savin_2006, tankov_federico} for the Hilbert space case and
\cite{mohammed_1984, mohammed_1998, flandoli_zanco} for the Banach space case. A ``mixed'' approach is employed in  \cite{federico_2011}. }   This is done at a cost of of moving to infinite-dimension.

In  \cite{defeo_federico_swiech} (where delays are only in the state) the classical approach of rewriting the state equation in the Hilbert space $$X=:\mathbb R^n \times L^2([-d,0];\mathbb R^n)$$ was used. However when delays  in the control appear this procedure  has to be extended carefully. One possible way would be to use the extended delay semigroup (including the control in the state-space of the delay semigroup), e.g. see \cite[Part II, Chapter 4]{delfour} for deterministic problems. This approach brings the complication of having an unbounded control operator ("boundary control"). However, when the delays appear in a linear way in the state equation, an alternative approach can be used. Such approach  was  proposed by \cite{VK} for deterministic control problems  (see also \cite[Part II, Chapter 4]{delfour}) and extended in \cite{gozzi_marinelli_2004} to the stochastic setting.  In \cite{gozzi_marinelli_2004} a linear state equation with additive noise is considered and an equivalent abstract representation of the state equation in the Hilbert space $X$ is proved. In this paper we generalize this result of \cite{gozzi_marinelli_2004} to the following nonlinear state equation with multiplicative noise (note that such state equation is more general than \eqref{eq:SDDE_linear_intro}):
\begin{equation*}
\left\{\begin{array}{l}
dy(t) = \ds \left[b_0 ( y(t),u(t) )+ \int_{-d}^0 a_1(\xi)y(t+\xi)\,d\xi +
            \int_{-d}^0p_1(\xi)u(t+\xi)\,d\xi\right]dt \\[10pt]
\ds \qquad\qquad  + \sigma_0(y(t),u(t))\, dW(t), \quad t \geq 0 \\[10pt]
y(0)=\eta_0, \quad y(\xi)=\eta_1(\xi),\;
u(\xi)=\delta(\xi),\;\; \xi\in[-d,0),
\end{array}\right.
\end{equation*}
In this case the abstract state equation in $X$ is of the form 
\begin{equation*}
\begin{cases}
dY(t) = [A Y(t)+b(Y(t),u(t))]dt + \sigma(Y(t),u(t))\,dW(t), \quad \forall t \geq 0, \\[4pt]
Y(0) = x=M(\eta,\delta).
\end{cases}
\end{equation*}
for suitable  operators $A,M$ and  coefficients $b, \sigma$.  See Theorem \ref{th:equiv} for the equivalent abstract representation of the state equation in $X.$

Going back to our original problem, \eqref{eq:SDDE_linear_intro} can  be rewritten on $X$ as
\begin{equation}\label{eq:intro_state_eq_in_X}
\begin{cases}
dY(t) = [\mathcal A Y(t)+f(u(t))]dt + \sigma(u(t))\,dW(t), \quad \forall t \geq 0, \\[4pt]
Y(0) = x=M(\eta,\delta),
\end{cases}
\end{equation}
for suitable  $\mathcal A,f, \sigma$ (see \eqref{eq:abstract2}). The functional $\mathcal J(\eta,\delta;u(\cdot))$ is rewritten as
\begin{equation*}
J(x;u(\cdot)) :=
\mathbb E \left[ \int_0^{\infty} e^{-\rho t}L(Y(t),u(t))\,dt \right],
\end{equation*}
for a suitable cost function $L$  (see \eqref{ex1jbisbis}). Having lifted the problem in the space $X$ we are in a similar setting to the one of \cite{defeo_federico_swiech}, hence we wish to proceed in a similar way. Indeed we want to use the theory of viscosity solutions in Hilbert spaces (see \cite[Chapter 3]{fgs_book}) in order to treat the optimal control problem. However, $\mathcal A,f,\sigma$ in \eqref{eq:intro_state_eq_in_X} have a  different  structure than the ones in \cite{defeo_federico_swiech}. Indeed in \cite{defeo_federico_swiech} the unbounded operator  is the classical delay operator, while $\mathcal A$ here it is (up to a bounded perturbation) its adjoint and the coefficient $f$ here has a non-zero $L^2-$component.

At this point, in order to use the theory of viscosity solutions in Hilbert spaces  of \cite[Chapter 3]{fgs_book}, we rewrite the state equation  by introducing a maximal dissipative operator $\tilde{\mathcal A}$ in the state equation, see Proposition \ref{prop:Amaxdiss}. Next we introduce an operator $B$ satisfying the so called weak $B$-condition (which is crucial in the theory of viscosity solutions in Hilbert spaces), see Proposition \ref{prop:weak_B}. Hence, we prove that the data of the problem satisfy some regularity conditions with respect to the norm induced by the operator $B^{1 / 2}$, see Lemma \ref{lemma:regularity_coefficients_theory}. This enables us to characterize the value function of the problem as the unique viscosity solution of the following fully non linear second order infinite-dimensional HJB equation
\begin{equation*}
\rho v(x) - \cp{\tilde{\mathcal A} x}{Dv(x)}_X + H(x,Dv(x),D^2v(x))=0,
\quad \forall x \in X,
\end{equation*}
where $H$ is the Hamiltonian. See Theorem \ref{th:existence_uniqueness_viscosity_infinite} for such result. To the best of our knowledge, this is the first existence and uniqueness result for fully nonlinear HJB equations in Hilbert spaces related to  stochastic optimal control problems with delays in the state and in the control. This extends the corresponding result of \cite{defeo_federico_swiech} to the case of delays (also) in the control. 
Moreover  in the present paper the delay kernels $a_1,p_1$ are such that their rows  $a_1^j,p_1^j  \in L^2([-d,0]; \mathbb R^n)$ for every $j \leq n$. Instead  in \cite{defeo_federico_swiech} a higher regularity of $a_1,a_2$ (where $a_2$ is the delay kernel associated to the diffusion) is required, i.e. $a_1^j ,a_2^j \in W^{1,2}([-d,0]; \mathbb R^n)$ and $a_1^j(-d)=a_2^j(-d)=0$ for every $j \leq n$  (of course $p_1=0$, i.e. only delays in the state are present). However we remark that the structure of the state equation in \cite{defeo_federico_swiech}, with delays only in the state, is more general.

For a complete picture of the literature related to optimal control problems with delays only in the state we refer to \cite{defeo_federico_swiech}, here we only list some results: for  stochastic optimal control problems with delays only in the state e.g. see  \cite{biffis_gozzi_prosdocini_2020}, \cite{biagini_gozzi_zanella_2022}, \cite{deFeoSwiech}, \cite{djehiche_gozzi_zanco_zanella_2022}, \cite{dizacinto_federico_gozzi_2011}, \cite{federico_2011},  \cite{tankov_federico}, \cite{fuhrman_tessitore_masiero_2010}, \cite{masiero_2022}, \cite{pang_2019}.
For deterministic optimal control problems with delays only in the state e.g. see \cite{carlier_tahraoui_2010}, \cite{goldys_1}, \cite{goldys_2}.

 Stochastic differential equations with delays also in the control are more difficult since in this case the so called structure condition, that is the requirement that the range of the control operator is contained in the range of the noise, does not hold, e.g. see \cite[Subsection 2.6.8]{fgs_book}. This fact, together with  the lack of smoothing of the transition semigroup associated to the linear part of the equation (this is a common feature also in problems with delays only in the state), prevent the use of standard techniques, based on mild solutions and on backward stochastic differential equations. However stochastic differential equations with delays only in the control, linear structure of the state equation and additive noise (the HJB equation is semilinear) were completely solved in \cite{gozzi_masiero_2017}, \cite{gozzi_masiero_2017_b}, \cite{gozzi_masiero_2022} by means of a partial smoothing property for the Ornstein–Uhlenbeck transition semigroup which permitted to apply a variant of the approach via mild solutions in the space of continuous functions. See also \cite{gozzi_masiero_2022b}, \cite{gozzi_masiero_2023}, where this approach is extended to stochastic optimal control problems with unbounded control operators and applications to problems with delays only in the control (with delay kernel being a Radon measure) are given. 
Finally we refer to \cite{tacconi} for a deterministic optimal control problem with delays only  in the control and linear structure of the state equation solved by means of viscosity solutions in the space $\mathbb R \times W^{1,2}([-d,0])$  (see also Remark \ref{rem:x_0_leq_x_-1}). 

At the end of the manuscript, we provide applications of our results to problems coming from economics. We consider a stochastic optimal advertising problem with delays in the state and in the control and controlled diffusion, generalizing the one from \cite{gozzi_marinelli_2004}, \cite{gozzi_marinelli_savin_2006}  (see also \cite{nerlove_arrow} for the original deterministic model). We characterize the value function as the unique viscosity solution of the fully non-linear HJB equation. We recall that, in the stochastic setting with additive noise, in \cite{gozzi_marinelli_2004}, \cite{gozzi_marinelli_savin_2006}  a verification theorem was proved in the context of classical solutions  and optimal feedback strategies were derived.   Moreover an explicit (classical) solution of the HJB equation was derived in a specific case. We also recall that in \cite{defeo_federico_swiech} the case with no delays in the control (i.e. $p_1=0$) was treated via viscosity solutions.  \\
Finally, we consider a stochastic optimal investment problem with with time-to-build, inspired by  \cite[p. 36]{fabbri_federico} (see also e.g. \cite{bambi_fabbri_gozzi (2012)}, \cite{bambi_digirolami_federico_gozzi (2017)} for similar models in the deterministic setting). We characterize the value function as the unique viscosity solution of the fully non-linear HJB equation.

The paper is organized as follows. In Section \ref{sec:formul} we introduce the problem and state the main assumptions. In Section \ref{sec:reformulation_infinite_dim} we prove an equivalent infinite-dimensional formulation for a more general state equation and we rewrite the problem in an infinite dimensional setting. In Section \ref{sec:B_continuity} we prove some preliminary estimates for solutions of the state equation and the value function. In Section \ref{sec:th_uiqueness} we introduce the notion of viscosity solution of the HJB equation and state a theorem about the existence and uniqueness of viscosity solutions, and characterize the value function as the unique viscosity solution. In  Section \ref{sec:advertising} we provide  applications to problems coming from economics: stochastic optimal advertising models and stochastic investment models with time-to-build.


\section{Setup and assumptions}\label{sec:formul}
We denote by  $M^{m \times n}$ the space of real valued $m \times n$-matrices and we denote by $|\cdot|$ the Euclidean norm in $\mathbb{R}^{n}$ as well as the norm of elements of $M^{m \times n}$ seen as linear operators from $\mathbb{R}^{m}$ to  $\mathbb{R}^{n}$. We will write $x \cdot y$ for the inner product in $\mathbb R^n$.

Let $d>0$.
We consider 
the standard Lebesgue space $L^2:=L^2([-d,0];\mathbb{R}^n)$ of square integrable functions from  $[-d,0]$ to $\erre^{n}$, denoting by  
$\langle \cdot,\cdot\rangle_{L^{2}}$ the inner product in $L^2$ and by $|\cdot|_{L^{2}}$ the norm. We also consider the standard Sobolev space $W^{1,2}:=W^{1,2}([-d,0]; \mathbb{R}^n)$ of functions in $L^{2}$ admitting weak derivative in $L^{2}$, endowed with the inner product $\langle f,g\rangle_{W^{1,2}}:= \langle f,g\rangle_{L^{2}}+\langle f',g'\rangle_{L^{2}}$ and norm $|f|_{W^{1,2}}:=(|f|^2_{L^{2}}+|f'|^2_{L^{2}})^{\frac{1}{2}}$, which render it a Hilbert space. It is well known that the space $W^{1,2}$ can be identified with the space of absolutely continuous functions from $[-d,0]$ to $\R^{n}$.

Let $\tau=\left(\Omega, \mathcal{F},\left(\mathcal{F}_t\right)_{t \geq 0}, \mathbb{P}, W\right)$ be a reference probability space, that is $(\Omega, \mathcal{F}, \mathbb{P})$ is a complete probability space, $W=(W(t))_{t \geq 0}$ is a standard $\mathbb{R}^q$-valued Wiener process, $W(0)=0$, and $\left(\mathcal{F}_t\right)_{t \geq 0}$ is the augmented filtration generated by $W$. We consider the following controlled stochastic delay differential equation (SDDE) 
\begin{equation}
\label{eq:SDDE_2}
\begin{cases}
dy(t) = \ds \left[a_0 y(t)+b_0 (u(t) )+ \int_{-d}^0 a_1(\xi)y(t+\xi)\,d\xi +
            \int_{-d}^0p_1(\xi)u(t+\xi)\,d\xi\right]dt 
\ds \\ \ \ \ \ \ \ \ \ \ \ \ + \sigma_0(u(t))\, dW(t), \quad t \geq 0, \\[10pt]
y(0)=\eta_0, \quad y(\xi)=\eta_1(\xi),\;
u(\xi)=\delta(\xi),\;\; \xi\in[-d,0).
\end{cases}
\end{equation} 
where 
\begin{enumerate}[(i)]
\item given a bounded measurable  set $U\subset \R^{p}$,
$u(\cdot)$ is the control process lying in the set 
\[
\mathcal{U}_\tau=\{u(\cdot): \Omega\times [0,+\infty )\to U: \	 u(\cdot) \ \mbox{is} \ (\mathcal{F}_t)\mbox{-progressively measurable and integrable a.s.} \};
\]
\item $\eta_0 \in \mathbb{R}^n$ and $\eta_1 \in L^2$  are the initial conditions of the state $y$;
\item $\delta \in L^2([-d,0); U)$ is the initial condition of the control $u(\cdot);$
\item $b_0 \colon  U \to \mathbb{R}^n$, $\sigma_0 \colon  U \to M^{n\times q}$;
\item $a_{1}, p_1 :[-d,0]\to M^{n \times n}$ 
and if $a_{1}^{j},p_1^j$ are the $j$-th row of $a_1(\cdot)$, $p_1(\cdot)$ respectively  for $j=1,...,n$, then  $a_1^{j}, p_1^{j}\in L^2([-d,0]; \mathbb R^n)$.
\end{enumerate}
\begin{remark}
Similarly to \cite{defeo_federico_swiech} we cannot treat the case of pointwise delay (e.g. $a_1,p_1=\delta_{-d}$ the Dirac's Delta).
In \cite{defeo_federico_swiech} a higher regularity of $a_1,a_2$ (where $a_2$ is the delay kernel associated to the diffusion) is required, i.e. $a_1^j ,a_2^j \in W^{1,2}([-d,0]; \mathbb R^n)$ and $a_1^j(-d)=a_2^j(-d)=0$ for every $j \leq n$  (of course $p_1=0$, i.e. in \cite{defeo_federico_swiech} only delays in the state are present). See also \cite{goldys_1}, \cite{goldys_2} for similar restrictions in deterministic problems. Here, instead, we require less regularity, i.e. $a_1^{j}, p_1^{j}\in L^2([-d,0]; \mathbb R^n)$.
\end{remark}
We will assume the following conditions.
\begin{assumption}\label{hp:state} 
$b_0 \colon  U \to \mathbb{R}^n$, $\sigma_0 \colon  U \to M^{n\times q}$
are continuous and bounded.
\end{assumption}
Under Assumption \ref{hp:state}, by \cite[Theorem IX.2.1]{revuz}, for each initial data $\eta:=(\eta_0,\eta_1)\in \mathbb{R}^{n}\times L^2([-d,0];\mathbb R^n)$, $\delta\in  L^2([-d,0);U)$, and each control $u(\cdot )\in\ \mathcal{U}$,  there exists a unique (up to Indistinguishability) strong solution to \eqref{eq:SDDE} and this solution admits a version with continuous paths that we denote by  $y^{\eta,\delta;u}$.

We consider the following infinite horizon optimal control problem. Given $\eta=(\eta_0,\eta_1)\in \mathbb{R}^{n}\times L^2$, $\delta \in L^2([-d,0); U)$ , we define a cost functional of the form
\begin{equation}
\label{eq:obj-origbis}
\mathcal J(\eta,\delta;u(\cdot)) =
\mathbb E\left[\int_0^{\infty} e^{-\rho t}
l(y^{\eta,\delta,u}(t),u(t)) dt \right]
\end{equation}
where $\rho>0$ is the discount factor, 
$l \colon \mathbb{R}^n \times U \to \mathbb{R} $ is the running cost. As in \cite[p. 98, Equation (2.8)]{fgs_book}, we define
\[
\mathcal{U}=\bigcup_{\tau}\mathcal{U}_\tau,
\] 
where the union is taken over all reference probability spaces $\tau$.
 The goal is to minimize $\mathcal{J}(\eta,\delta;u(\cdot))$ over all $u(\cdot)\in \mathcal{U}$. 
This is a standard setup of a stochastic optimal control problem (see \cite{yong_zhou, fgs_book}) used to apply the dynamic programming approach. We remark (see e.g. \cite{fgs_book}, Section 2.3.2) that 
\[
\inf_{u(\cdot)\in \mathcal{U}}\mathcal J(\eta,\delta;u(\cdot))=\inf_{u(\cdot)\in \mathcal{U}_\tau} \mathcal J(\eta,\delta;u(\cdot))
\]
for every reference probability space $\tau$ so the optimal control problem is in fact independent of the choice of a reference probability space.
\begin{assumption}\label{hp:cost} 
\begin{itemize}
\item[(i)] $l \colon \mathbb{R}^n \times U \to \mathbb{R} $ is continuous.
\item[(ii)]
There exist constants $K,m>0$, such that
\begin{equation}\label{eq:growth_l}
|l(z,u)| \leq K(1+|z|^m) \ \ \ \forall y\in  \mathbb{R}^n, \ \forall u \in U.
\end{equation}
\item[(iii)] There exists a local modulus of continuity for $l(\cdot,u)$, uniform in $u\in U$, i.e. for each $R>0$, there exists a nondecreasing concave function $\omega_{R}:\mathbb{R}^{+}\to \mathbb{R}^{+}$ such that   $\lim_{r\to 0^{+}} \omega_{R}(r)=0$ and 
\begin{equation}\label{eq:regularity_l}
|l(z,u)-l(z',u)| \leq \omega_R( |z-z'|)
\end{equation}
for every $z,z' \in \mathbb{R}^n$ such that $|z|,|z'| \leq R$ and every $u \in U$.
\end{itemize}
\end{assumption}
We will show, suitably reformulating the state equation in an infinite dimensional framework, that the cost functional is well defined and finite for a sufficiently large  discount factor $\rho>0$.

Throughout the paper we will write $C>0,\omega, \omega_R$ to indicate, respectively, a constant, a modulus continuity, and a local modulus of continuity, which may change from place to place if the precise dependence on other data is not relevant. The equality involving random variables will be intended $\mathbb P-$a.s..
\section{Infinite dimensional Markovian representation}\label{sec:reformulation_infinite_dim}
The optimal control problem at hand is not Markovian due to the delay. In order to regain Markovianity and approach the problem by Dynamic Programming we reformulate the state equation in an infinite-dimensional space generalizing a well-known procedure, see \cite[Part II, Chapter 4]{delfour}, \cite{VK} for deterministic delay equations and \cite{gozzi_marinelli_2004} for the stochastic case with linear state equation and  additive noise.

In this section, in place of \eqref{eq:SDDE_2}, we will consider the following more general state equation:
\begin{equation}
\label{eq:SDDE}
\left\{\begin{array}{l}
dy(t) = \ds \left[b_0 ( y(t),u(t) )+ \int_{-d}^0 a_1(\xi)y(t+\xi)\,d\xi +
            \int_{-d}^0p_1(\xi)u(t+\xi)\,d\xi\right]dt \\[10pt]
\ds \qquad\qquad  + \sigma_0(y(t),u(t))\, dW(t), \quad t \geq 0 \\[10pt]
y(0)=\eta_0, \quad y(\xi)=\eta_1(\xi),\;
u(\xi)=\delta(\xi),\;\; \xi\in[-d,0),
\end{array}\right.
\end{equation}
where, in this case, $b_0 \colon \mathbb{R}^n  \times U \to \mathbb{R}^n$, $\sigma_0 \colon \mathbb{R}^n \times U \to M^{n\times q}$, while all the other terms satisfy the same conditions as in \eqref{eq:SDDE_2}.
In this setting we consider the following assumptions.
\begin{assumption}\label{hp:state2} 
$b_0 \colon \mathbb{R}^n  \times U \to \mathbb{R}^n$, $\sigma_0 \colon \mathbb{R}^n \times U \to M^{n\times q}$
are continuous and  there exist constants $L,C>0$ such that
\begin{align*}
& |b_0(y,u)-b(y',u)|\leq L |y-y'|, \\
&|b_0(y,u)|\leq C(1+|y|), \\
&|\sigma_0(y,u)-\sigma_0(y',u)| \leq L |y-y'|, \\
&|\sigma_0(y,u)| \leq C(1+|y|),
\end{align*}
for every $y,y' \in \mathbb{R}^n$, $u \in U$.
\end{assumption}
Under Assumption \ref{hp:state2}, by \cite[Theorem IX.2.1]{revuz}, for each initial data $\eta:=(\eta_0,\eta_1)\in \mathbb{R}^{n}\times L^2([-d,0];\mathbb R^n)$, $\delta\in  L^2([-d,0);U)$, and each control $u(\cdot )\in\ \mathcal{U}$,  there exists a unique (up to indistinguishability) strong solution to \eqref{eq:SDDE} and this solution admits a version with continuous paths that we denote by  $y^{\eta,\delta;u}$.

We define 
$
X := \mathbb{R}^n \times L^2
$.
An element $x\in X$ is 
a couple $x= (x_0,x_1)$, where $x_0 \in \mathbb{R}^n$, $x_{1}\in L^{2}$; sometimes, we will write $x=\begin{bmatrix}
x_0\\ x_1
\end{bmatrix}.$ 
The space $X$ is a Hilbert space when endowed
with the inner product 
\begin{align*}
\langle x,z\rangle_{X}& := x_0\cdot  z_0 + \langle x_1,z_1 \rangle_{L^2}= x_0  z_0 + \int_{-d}^0  x_1(\xi)\cdot z_1(\xi) \,d\xi, \ \ \ x=(x_{0},x_{1})\in X, \ z=(z_{0},z_{1})\in X.
\end{align*}
The  induced norm, denoted by $|\cdot|_X$, is then
$$
|x|_{X} = \left( |x_0|^2 + \int_{-d}^0 |x_1(\xi)|_{L^{2}}^2\,d\xi\right)^{1/2}, \ \ \ x=(x_0,x_1) \in X.
$$
For $R>0$, we  set  the following notation for the open balls of radius $R$ in $X$, $\mathbb R^n,$ and $L^2$, respectively:
$$B_R:=\{x \in X: |x|_{X} < R\}, \ \ \ B_R^0:=\{x_0 \in \mathbb R^n: |x_0| < R\}, \ \ \ B_R^1:=\{x_1 \in L^2[-d,0]: |x_1|_{L^{2}} < R\},$$

We denote by 
$\mathcal{L}(X)$ the space of bounded linear operators from $X$ to $X$, endowed with the operator norm 
$$|L|_{\mathcal{L}(X)}=\sup_{|x|_{X}=1} |Lx|_{X}.$$
An operator $L \in \mathcal{L}(X)$ can be seen as 
$$
Lx=\begin{bmatrix}
L_{00} & L_{01}\\
L_{10} & L_{11}
\end{bmatrix}\begin{bmatrix}
x_0\\
x_1
\end{bmatrix}, \quad x=(x_0,x_1) \in X,
$$
where $L_{00} \colon \mathbb R^n \to \mathbb R^n$, $L_{01} \colon   L^2 \to \mathbb R^n$, $L_{10} \colon \mathbb R^n \to L^2$, $L_{11} \colon L^2 \to L^2$ are bounded linear operators.
Moreover, given two separable Hilbert spaces $(H, \langle \cdot , \cdot \rangle_H), (K, \langle \cdot , \cdot \rangle_K)$, we denote by $\mathcal{L}_1(H,K)$ the space of trace-class operators endowed with the norm 
$$|L|_{\mathcal{L}_1(H,K)}=\inf \left \{\sum_{i \in \mathbb N} |a_i|_H |b_i|_{K}: Lx=\sum_{i \in \mathbb N} b_i  \langle a_i,x \rangle, a_i \in H, b_i \in  K , \forall i \in \mathbb N \right \}.$$
We also denote by  $\mathcal{L}_2(H,K)$ the space of Hilbert-Schmidt operators from $H$ to $K$ endowed with the norm
$$|L|_{\mathcal{L}_2(H,K)}=(\operatorname{Tr}(L^*L))^{1/2}.$$ When $H=K$ we simply write $\mathcal{L}_1(H)$, $\mathcal{L}_2(H)$.
We denote by $S(H)$ the space of self-adjoint operators in $\mathcal{L}(H)$. If $Y,Z\in S(H)$, we write $Y\leq Z$ if $\langle Yx,x \rangle\leq \langle Zx,x \rangle$ for every $x \in H$.

Let us define the unbounded linear operator $A:D(A)\subset X\to X$ as
follows:
\begin{align*}
& A x = \begin{bmatrix}
x_1(0) \\
-x_1'
\end{bmatrix} ,    \ \ \ \ D(A) = \left\{ x \in X:
       x_1 \in W^{1,2}([-d,0]; \mathbb{R}^n), \ x_1(-d)=0\right\}.
\end{align*}
The adjoint of  $A$ is the operator $A^*:D(A^*)\subset X\to X$ (e.g., see \cite[Proposition 3.4]{goldys_1}) 
\begin{align*}
& A^*x= \begin{bmatrix} 0 \\
   x_1'\end{bmatrix}, \quad  D(A^*) = \left\{ x \in X: x_1 \in W^{1,2}([-d,0],\mathbb{R}^n), \ x_1(0)=x_0\right\}.
\end{align*}
Note that $A^*$ is the  generator of the delay semigroup, see, e.g.,  \cite[Part II, Chapter 4]{delfour}. For problems with delays (also) in the control appearing in a linear way in the state equation, its adjoint, i.e. $A$, is used to reformulate the problem in the space $X$ (see, e.g.,  \cite[Part II, Chapter 4]{delfour}). Indeed,  $A$ is the generator of a strongly continuous semigroup $e^{tA} $ on $X$, whose explicit expression (see, e.g.,  \cite[Eq. (73)]{tacconi}) is
\begin{equation}\label{eq:expression_semigroup}
e^{At}x=\begin{bmatrix} x_0+\int_{(-t) \vee  (-d)}^0 x_1(\xi)d \xi,\\\Phi(t)x_1 \end{bmatrix}, \quad x =(x_0,x_1) \in X,
\end{equation}
where $\Phi(t)$ is the semigroup of truncated right shift in $L^{2}$:
$$
[\Phi(t) f](\xi)=1_{[-d,0]}(\xi-t) f(\xi-t) \quad \forall f \in L^2.
$$
Now define $b \colon X \times U \to X$ by
$$b(x,u)=\begin{bmatrix}
b_0(x_0,u)\\
a_1x_0+p_1 u
\end{bmatrix} \quad \forall x=(x_0,x_1) \in X, u \in U$$ 
and  $\sigma \colon X \times U \to \mathcal{L}(\mathbb{R}^q,X)$ by 
$$\sigma(x,u)w=\begin{bmatrix} 
\sigma_0(x_0,u)\\
0
\end{bmatrix}
  \quad \forall x=(x_0,x_1) \in X, u \in U, w \in \mathbb{R}^q. $$ 
 
Consider the infinite dimensional SDE
\begin{equation}
\label{eq:abstract}
\begin{cases}
dY(t) = [A Y(t)+b(Y(t),u(t))]dt + \sigma(Y(t),u(t))\,dW(t), \\[4pt]
Y(0) = x \in X.
\end{cases}
\end{equation}
By \cite[Theorem 1.127]{fgs_book},  for each $u(\cdot) \in\mathcal{U}$, \eqref{eq:abstract} admits a unique mild solution; that is, there exists a unique progressively measurable   $X-$valued process $Y=(Y_{0},Y_{1})$ such that 
\begin{equation}\label{eq:mild1}
Y(t) =e^{{tA}}x+\int_{0}^{t}  e^{(t-s)A} b(Y(s),u(s)) d s+\int_{0}^{t} e^{(t-s)A}\sigma(Y(s),u(s))d W(s).
\end{equation}
Define  the linear  operator $M \colon X \times L^2([-d,0],U) \to X$ by
\begin{align*}
M (\alpha,\beta) := (\alpha_0,m(\alpha_1,\beta) ), \ \ \alpha=(\alpha_0,\alpha_1)\in X, \ \beta\in L^2([-d,0],U), 
\end{align*}
where
$$
m(\alpha_1,\beta) (\xi) := \int_{-d}^\xi a_1(\zeta) \alpha_1(\zeta-\xi)\,d\zeta
+ \int_{-d}^\xi p_1(\zeta) \beta(\zeta-\xi)\,d\zeta, \ \ \ \ \xi \in [-d,0].$$
\begin{theorem}\label{th:equiv} Let Assumption \ref{hp:state2} hold.
 We have the following claims.

\begin{enumerate}[(i)]
\item   Let $Y^{x,u}$ be the unique mild solution to \eqref{eq:abstract}
with  initial datum $x \in X$ and control
$u(\cdot)\in\mathcal{U}$. For every  $t \geq d$ 
$$
Y^{x,u}(t) =(Y^{x,u}_0(t),Y^{x,u}_1(t))= M \left  ( \left (Y^{x,u}_0(t),Y^{x,u}_0(t+\cdot) \right ),u(t+\cdot) \right ).$$
\item 
Let $y^{\eta,\delta;u}$ be the solution 
to SDDE \eqref{eq:SDDE} with initial data $\eta,\delta$ and under the control $u(\cdot)\in\mathcal{U}$, and let
$
x = M(\eta_{0},\eta_{1},\delta).
$
Then, for every $t\geq 0$,
$$
Y^{x,u}(t) =(Y^{x,u}_0(t),Y^{x,u}_1(t)) = M \left ( \left (y^{\eta,\delta,u}(t),y^{\eta,\delta, u}(t+\cdot) \right ),u(t+\cdot) \right ).
$$
In particular, for every $t\geq 0$,
$$y^{\eta,\delta,u}(t)=Y^{x,u}_0(t).$$
\end{enumerate}
\end{theorem}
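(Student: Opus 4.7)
The plan is to prove both claims directly from the mild formulation \eqref{eq:mild1} combined with the explicit expression \eqref{eq:expression_semigroup} of the semigroup $e^{tA}$. Two structural features will be used throughout: first, $\sigma(x,u)$ has zero $L^{2}$-component, so the stochastic integral contributes nothing to the second coordinate of the mild solution; second, the truncated right shift satisfies $\Phi(t)f=0$ in $L^2$ as soon as $t\geq d$.

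For claim (i) I would compute $Y^{x,u}_1(t)$ for $t\geq d$ directly from \eqref{eq:mild1}. The first observation eliminates the stochastic integral; the second kills the contribution $\Phi(t)x_1$ of the initial datum. Only the drift term survives, and its second component at $\xi \in [-d,0]$ equals
$$\int_0^t 1_{[-d,0]}(\xi-(t-s))\bigl[a_1(\xi-(t-s))Y^{x,u}_0(s) + p_1(\xi-(t-s))u(s)\bigr]\,ds.$$
The change of variable $\zeta = \xi-(t-s)$ rewrites this as
$$\int_{-d}^\xi \bigl[a_1(\zeta)Y^{x,u}_0(t+\zeta-\xi) + p_1(\zeta)u(t+\zeta-\xi)\bigr]\,d\zeta,$$
which is precisely $m(Y^{x,u}_0(t+\cdot),u(t+\cdot))(\xi)$; note that for $t\geq d$, $\zeta\in[-d,\xi]$ and $\xi\in[-d,0]$ one has $t+\zeta-\xi\geq 0$, so the integrand only involves $Y^{x,u}_0$ and $u$ evaluated on $[0,t]$. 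Combined with the trivial identity $Y^{x,u}_0(t)=Y^{x,u}_0(t)$, this proves (i).

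For claim (ii) I would introduce the candidate process
$$\tilde Y(t) := M\bigl((y^{\eta,\delta,u}(t), y^{\eta,\delta,u}(t+\cdot)),u(t+\cdot)\bigr),$$
with the convention $y^{\eta,\delta,u}(s)=\eta_1(s)$ and $u(s)=\delta(s)$ for $s\in[-d,0)$, and verify that $\tilde Y$ satisfies \eqref{eq:mild1} with initial datum $x=M(\eta_0,\eta_1,\delta)$. The initial-value check $\tilde Y(0)=(\eta_0,m(\eta_1,\delta))=x$ is immediate. Under Assumption \ref{hp:state2} together with $a_1^j,p_1^j\in L^2$, the coefficient $b$ is Lipschitz and $\sigma$ is Lipschitz with linear growth on $X$, so the mild solution of \eqref{eq:abstract} is unique by \cite[Theorem 1.127]{fgs_book}; identifying $\tilde Y$ with $Y^{x,u}$ then yields both assertions of (ii) at once.

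The substantive step in the verification of \eqref{eq:mild1} for $\tilde Y$ is a pair of Fubini-type identities. For the first coordinate one must establish
\begin{align*}
&\int_{(-t)\vee(-d)}^0 m(\eta_1,\delta)(\xi)\,d\xi + \int_0^t\!\!\int_{(-(t-s))\vee(-d)}^0 \bigl[a_1(\xi)y(s)+p_1(\xi)u(s)\bigr]\,d\xi\,ds \\
&\qquad = \int_0^t\!\!\int_{-d}^0 \bigl[a_1(\xi)y(s+\xi)+p_1(\xi)u(s+\xi)\bigr]\,d\xi\,ds,
\end{align*}
which is obtained by swapping the order of integration on each side and splitting the time argument $s+\xi$ according to its sign (using $y$ when $s+\xi\geq 0$ and $\eta_1$ when $s+\xi<0$, and analogously for $u$ versus $\delta$). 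The verification for the second coordinate uses the same change of variable as in (i), together with the observation that for $t<d$ the contribution $[\Phi(t)m(\eta_1,\delta)](\xi)=m(\eta_1,\delta)(\xi-t)$ exactly accounts for the portion of the target integral over $\zeta\in[-d,\xi-t]$ that involves the initial data $\eta_1$ and $\delta$. Keeping track of these range splittings is the main bookkeeping obstacle; the remaining manipulations are routine Fubini and change-of-variables arguments.
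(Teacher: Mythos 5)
Your proof of part (i) is essentially the paper's own argument: both compute the second component of the mild solution via the explicit semigroup formula \eqref{eq:expression_semigroup}, note that the stochastic convolution has vanishing $L^2$-component and that $\Phi(t)x_1=0$ for $t\geq d$, and perform the change of variable $\zeta=\xi-(t-s)$. For part (ii), however, you take a genuinely different route. The paper argues from the abstract equation towards the SDDE: it first derives the representation $Y^{x,u}(t)=M(\tilde Y_0^{x,u}(t),\tilde Y_0^{x,u}(t+\cdot),u(t+\cdot))$ for all $t\ge 0$ by substituting the expression for $x_1(\xi-t)$ into the formula for $Y_1^{x,u}(t)(\xi)$, then invokes the weak-solution characterization of the mild solution (via \cite[Theorem 3.2]{gawarecki}), tests against a sequence $h^k\in D(A^*)$ approximating evaluation at $0$ (using continuity of $Y_1(t)(\cdot)$ to pass to the limit), and concludes that $\tilde Y_0^{x,u}$ solves \eqref{eq:SDDE}, so that uniqueness of the SDDE identifies $\tilde Y_0^{x,u}=y^{\eta,\delta,u}$. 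You instead go in the converse direction: you lift the SDDE solution to the candidate $\tilde Y(t)=M((y(t),y(t+\cdot)),u(t+\cdot))$, verify the mild formulation \eqref{eq:mild1} by Fubini and range-splitting identities (which I checked are correct, including the matching of $\Phi(t)m(\eta_1,\delta)$ with the portion of the target integral over $\zeta\in[-d,\xi-t]$), and conclude by uniqueness of the mild solution from \cite[Theorem 1.127]{fgs_book}. Your approach is more elementary in that it avoids the weak formulation and the test-function limit entirely, at the cost of somewhat heavier deterministic bookkeeping; the paper's approach has the advantage of producing the representation \eqref{eq:proof_gozzi_marinelli_Y1_rewritten_2} directly from the mild solution without presupposing the SDDE solution. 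The only point you should make explicit is that your candidate $\tilde Y$ is an admissible competitor for the uniqueness statement, i.e.\ that it is progressively measurable with continuous $X$-valued paths and the required moment bounds; this follows from path continuity of $y$ and continuity of shifts in $L^2$, but it deserves a sentence.
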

\begin{proof} 
\begin{enumerate}[(i)]
\item 
Using \eqref{eq:expression_semigroup}, we can rewrite the two components of \eqref{eq:mild1}  as
\begin{small}
\begin{align*}
\begin{bmatrix}
Y_0^{x,u}(t)\\
\\
Y_1^{x,u}(t)
\end{bmatrix}
=\begin{bmatrix}
x_0 +\int_{(-t) \vee  (-d)}^0 x_1(\xi)d \xi + \int_0^t \left[ b_0(Y_0^{x,u}(s),u(s))+ \int_{(-(t-s)) \vee  (-d)}^0 a_1(\xi)Y_0^{x,u}(s)+ p_1(\xi)u(s) d \xi \right] ds\\
 + \int_0^t  \sigma_0(Y_0^{x,u}(s),u(s)) dW(s)\\\\
\Phi(t) x_{1}+\int_{0}^{t}\Phi(t-s) a_{1}Y_{0}^{x,u}(s)d s+\int_{0}^{t}\Phi(t-s) p_{1} u(s)d s
\end{bmatrix}.
\end{align*}
\end{small}
Then, 

\begin{align}\label{eq:proof_gozzi_marinelli_Y1_rewritten}
Y_{1}^{x,u}(t)(\xi) =&\ 1_{[-d,0]}(\xi-t) x_1(\xi-t)+\int_{0}^{t}1_{[-d,0]}(\xi-t+s)  a_{1}(\xi-t+s) Y_{0}^{x,u}(s) d s\nonumber\\
&+\int_{0}^{t} 1_{[-d,0]}(\xi-t+s)  p_{1}(\xi-t+s) u(s) ds \nonumber \\
=&\ 1_{[-d,0]}(\xi-t) x_1(\xi-t)+\int_{(\xi-t)\vee -d}^{\xi}  a_{1}(\alpha) Y_{0}^{x,u}(t+\alpha-\xi) d \alpha\\
&+\int_{(\xi-t)\vee -d}^{\xi} p_{1}(\alpha) u(t+\alpha-\xi) d\alpha.\nonumber
\end{align}

For $t \geq  d$, we have $\xi-t \leq  -d$, so that
$$
Y_{1}^{x,u}(t)(\xi)=\int_{-d}^{\xi} a_{1}(\alpha) Y_{0}^{x,u}(t+\alpha-\xi) d \alpha+\int_{-d}^{\xi} p_{1}(\alpha) u(t+\alpha-\xi) d \alpha =m\left (Y^{x,u}_{0}(t+\xi), u(t+\xi) \right)
$$
from which we get the first claim.

\medskip


\item  Let $x=(x_0,x_1)=M\left(\eta_{0}, \eta_1, \delta\right)$.
For $\xi-t \in[-r, 0]$, $\xi \in[-r, 0]$ we have:
$$
x_1(\xi-t)=Y^{x,u}_{1}(0)(\xi-t) =\int_{-d}^{\xi-t} a_{1}(\alpha) \eta(t+\alpha-\xi) d \alpha+\int_{-d}^{\xi-t} p_{1}(\alpha) \delta(t+\alpha-\xi) d \alpha,
$$
so that inserting it into \eqref{eq:proof_gozzi_marinelli_Y1_rewritten} we have:
\begin{align}\label{eq:proof_gozzi_marinelli_Y1_rewritten_2}
Y_{1}^{x,u}(t)(\xi)=\int_{-d}^{\xi} a_{1}(\alpha) \tilde{Y}^{x,u}_{0}(t+\alpha-\xi) d \alpha+\int_{-d}^{\xi} p_{1}(\alpha) u(t+\alpha-\xi) d \alpha,
\end{align}
where we have defined $\tilde Y^{x,u}_0$ to be the extension of $Y^{x,u}_0$ to $[-d,0)$ by
$$
\tilde{Y}^{x,u}_{0}(s)= \begin{cases}\eta_1(s), & s \in[-d, 0), \\ Y^{x,u}_{0}(s), & s \geq 0.\end{cases}
$$
From \eqref{eq:proof_gozzi_marinelli_Y1_rewritten_2} we have:
\begin{align}\label{eq:M_proof}
Y^{x,u}(t)=(Y^{x,u}_0(t),Y^{x,u}_1(t))=M(\tilde{Y}_{0}^{x,u}(t),\tilde{Y}_{0}^{x,u}(t+\cdot),u(t+\cdot))=(\tilde{Y}_{0}^{x,u}(t),m(\tilde{Y}_{0}^{x,u}(t+\cdot),u(t+\cdot)).
\end{align}
To conclude the proof, by uniqueness of strong solutions to \eqref{eq:SDDE}, we need to prove that $\tilde Y_{0}^{x,u}$ satisfies \eqref{eq:SDDE}.
On the other hand, by \cite[Theorem 3.2]{gawarecki}, $Y^{x,u}(t)$ is also a weak solution of \eqref{eq:abstract}, i.e. it satisfies
\begin{align*}
\langle Y^{x,u}(t),h\rangle_X
=\ &\langle x,h\rangle_X +\int_0^t \langle Y^{x,u}(t),A^* h\rangle_X ds +\int_0^t \langle b(Y^{x,u}(t),u(t)), h\rangle_X ds\\& +\int_0^t \langle \sigma(Y^{x,u}(t),u(t)), h \rangle_X dW(s),  \ \ \ \ \forall t \geq 0, \ \forall h \in D(A^*).
\end{align*}
For $k\in\mathbb{N}\setminus\{0\}$, let $h^k=(1,h_1^k)$ be defined by 
$$h_1^k(\xi)= \int_{-d}^\xi k \mathbf{1}_{[-1/k,0]}(r)dr, \ \ \ \ \ \xi\in[-d,0].$$  
Then, 
$$
\begin{cases}
h^k\in D(A^*)\ \ \ \forall k\in\mathbb{N}\setminus\{0\},\\
h_1^k(\xi)\to 0 \ \ \mbox{ as } \ k \to \infty \ \ \mbox{ for a.e.} \ \xi \in [-d,0],\\
\lim_{k \to \infty}\int_{-d}^0 \frac{dh_1^k}{d\xi}(\xi) z(\xi) d \xi = z(0), \ \ \ \forall z \in {C}([-d,0];\R^{n}).
\end{cases}
$$
Therefore, for every $t \geq 0$, we have:
\begin{align*}
&Y^{x,u}_0(t)+\left \langle Y^{x,u}_1(t),h_1^k \right\rangle_{L^2}\\=& \ x_0+ \langle x_1,h_1^k \rangle_{L^2}+ \int_0^t\left \langle Y^{x,u}_1(s),\frac{dh_1^k}{d\xi} \right\rangle_{L^2} ds + \int_0^t b_0(Y^{x,u}_0(s),u(s))ds\\
& +\int_0^t \left\langle a_1 Y^{x,u}_0(s)+p_1 u(s) , h_1^k\right\rangle_{L^2}  ds + \int_0^t \sigma_0(Y^{x,u}_0(s),u(s))dW(s).
\end{align*}
Note that, for every $t \geq 0$, on a has $Y_1(t)(\cdot) \in {C}([-d,0];\R^{n})$, since $Y_1(t)(\cdot)$ is the sum of the convolutions of $L^2$-functions.
Then, taking  $k\to + \infty$ in the equation above, we get
\begin{align*}
Y^{x,u}_0(t)= x_0+  \int_0^t Y^{x,u}_1(s)(0)ds + \int_0^t b_0(Y^{x,u}_0(s),u(s))ds+ \int_0^t \sigma_0(Y^{x,u}_0(s),u(s))dW(s).
\end{align*}
By \eqref{eq:proof_gozzi_marinelli_Y1_rewritten_2}, with $\xi=0$ we have:
$$
 Y^{x,u}_{1}(s)(0) =\int_{-d}^{0} a_{1}(\xi) \tilde Y^{x,u}_{0}(s+\xi) d \xi+\int_{-d}^{0} p_{1}(\xi) u(s+\xi) d \xi,
$$
so that, for every $t \geq 0$,
\begin{align*}
Y^{x,u}_0(t)= &x_0+  \int_0^t \left[ \int_{-d}^{0} a_{1}(\xi) \tilde Y^{x,u}_{0}(s+\xi) d \xi +\int_{-d}^{0} p_{1}(\xi) u(s+\xi) d \xi  \right] ds + \int_0^t b_0(Y^{x,u}_0(s),u(s))ds\\
&+ \int_0^t \sigma_0(Y^{x,u}_0(s),u(s))dW(s).
\end{align*}
Recalling the definition of $\tilde Y_0^{x,u(\cdot)}$,
 this says that $\tilde Y_{0}^{x,u(\cdot)}$ satisfies \eqref{eq:SDDE}, so we conclude.
 \end{enumerate}
\end{proof}
\subsection{Objective functional} Using Theorem \ref{th:equiv}, we can give a Markovian
reformulation on the Hilbert space $X$ of the optimal control problem. 

We present such result for an optimal control problem with the more general state equation \eqref{eq:SDDE}. Consider the functional $\mathcal J$ defined by \eqref{eq:obj-origbis} with $y^{\eta,u(\cdot),\delta(\cdot)}$ being the solution of  \eqref{eq:SDDE} (in place of \eqref{eq:SDDE_2}).
Denoting by $Y^{x,u}$ a mild solution of
(\ref{eq:abstract}) for general initial datum $x \in X$ and control $u(\cdot) \in \mathcal{U}$ and introducing the functional 
\begin{equation}
\label{ex1jbisbis}
J(x;u(\cdot)) :=
\mathbb E \left[ \int_0^{\infty} e^{-\rho t}L(Y^{x,u}(t),u(t))\,dt \right],
\end{equation}
where
$$
L : X \times U\to\erre, \ \ \ 
L(x,u) = l(x_0,u),
$$
the original functional $\mathcal J$  and $J$ are related through
$$
\mathcal J(\eta,\delta;u(\cdot))=J(M(\eta,\delta);u(\cdot)). 
$$
We then consider the problem of optimizing $J$ under \eqref{eq:abstract}
and  define the value function $V$ for this problem:
\begin{equation}\label{valuing}
V(x) = \inf_{u\in\mathcal{U}} J(x;u(\cdot)).
\end{equation}
For what we said,
an optimal control  $u^*(\cdot)\in\mathcal{U}$ for the functional $J(x;\cdot)$ with $x=M(\eta,\delta)$ is also optimal for $\mathcal J(\eta,\delta;\cdot)$.
Hence, from now on, we focus on the optimization problem \eqref{valuing}.
\section{$B$-continuity}\label{sec:B_continuity}
In order to get  $B$-continuity of the value function $V$, needed  to employ the theory of viscosity solutions in infinite dimension,  we consider the simpler state equation \eqref{eq:SDDE_2}. In this case, the state equation \eqref{eq:SDDE_2} can be rewritten in infinite dimension as
\begin{equation}
\label{eq:abstract2}
\begin{cases}
dY(t) = [\mathcal A Y(t)+f(u(t))]dt + \sigma(u(t))\,dW(t), \\
Y(0) = x=M(\eta,\delta) \in X,
\end{cases}
\end{equation}
where 
$$\mathcal A : D(\mathcal A)=D(A) \subset X \to X, \ \ \ \ \ \mathcal A x =Ax+\begin{bmatrix}
 a_0 \\
  a_1
\end{bmatrix}x_{0}=\begin{bmatrix}
a_0 x_0+ x_1(0)\\
a_1 x_0   -x_1'
\end{bmatrix},
$$
\begin{align*}
f\colon  U \to X, \ \ \  f(u)=\begin{bmatrix}
b_0(u)\\
p_1 u
\end{bmatrix},
\end{align*}
and
\begin{align*}
\sigma \colon U \to \mathcal{L}(\mathbb{R}^q,X), \ \ \ \ 
\sigma(u)w=\begin{bmatrix}
\sigma_0(u)w\\
0
\end{bmatrix}.
\end{align*}
Indeed, since $\mathcal A$ is the sum of $A$  with a bounded linear operator, by \cite[Corollary 1.7]{engel_nagel}, we have that \eqref{eq:abstract} with specifications
 \begin{align} \label{eq:coefficient_restriction}
b_0(x_0,u)=a_0x_0+b_0(u), \ \ \ 
 \sigma_0(x_0,u)=\sigma_0(u),
\end{align}
and \eqref{eq:abstract2} are equivalent and have the same (unique) mild solution $Y^{x,u}(t)$.  

\subsection{Reformulation with a maximal dissipative operator}
 The aim of this subsection is to rewrite  \eqref{eq:abstract2}  with a maximal dissipative operator $\tilde{\mathcal A}$ in place of $\mathcal A$. The need for that is that we want  to use the viscosity solutions theory in infinite dimension to treat the HJB equation associated to the optimal control problem, which requires for the comparison theorem the presence of a maximal dissipative operator in the equation (see \cite[Chapter 3]{fgs_book}). The operator $\tilde{\mathcal A}$ is constructed by means of a suitable  shift of the operator $\mathcal A$. 
  \begin{prop} \label{prop:Amaxdiss}
 There exists $\mu_0 >0$ such that  $\tilde{\mathcal A}_\mu \colon D(\tilde {\mathcal A}_\mu) =D(A), \subset X \to X$ defined by  
\begin{align*}
\tilde{\mathcal A}_\mu x:=\mathcal A x-\mu x=\begin{bmatrix}
a_0x_{0}-\mu x_0+ x_1(0)\\
a_1 x_0 -\mu x_1  -x_1'
\end{bmatrix} = Ax + \begin{bmatrix}
a_0x_0-\mu  x_0\\
a_1 x_0 -\mu x_1  
\end{bmatrix}, \quad x \in D(\tilde {\mathcal A}_\mu)
\end{align*}
is  maximal dissipative for every $\mu \geq \mu_0$.
\end{prop}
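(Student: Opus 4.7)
The plan is to verify the two defining properties of maximal dissipativity separately: (a) dissipativity, $\langle \tilde{\mathcal A}_\mu x, x\rangle_X \le 0$ on $D(A)$ for $\mu$ large; (b) the range condition, $\operatorname{Range}(\lambda I-\tilde{\mathcal A}_\mu)=X$ for some $\lambda>0$. Since $D(A)$ is dense in $X$, the Lumer--Phillips theorem (see, e.g., \cite[Chapter 3]{fgs_book} or Engel--Nagel) then yields the claim.

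For dissipativity, I would compute the inner product explicitly using the splitting $\tilde{\mathcal A}_\mu = A + B_\mu$, where $B_\mu x := \bigl(a_0 x_0 - \mu x_0,\; a_1 x_0 - \mu x_1\bigr)$ is bounded on $X$. The term $\langle A x,x\rangle_X$ equals $x_1(0)\cdot x_0 + \langle -x_1',x_1\rangle_{L^2}$; an integration by parts together with the boundary condition $x_1(-d)=0$ built into $D(A)$ gives $\langle -x_1',x_1\rangle_{L^2}=-\tfrac{1}{2}|x_1(0)|^2$, so Young's inequality yields $\langle Ax,x\rangle_X \le \tfrac{1}{2}|x_0|^2$. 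For the bounded part,
\[
\langle B_\mu x,x\rangle_X \le |a_0|\,|x_0|^2 - \mu|x_0|^2 + |x_0|\int_{-d}^0 |a_1(\xi)|\,|x_1(\xi)|\,d\xi - \mu|x_1|_{L^2}^2,
\]
and Cauchy--Schwarz plus Young applied to the cross term produce an estimate of the form
\[
\langle \tilde{\mathcal A}_\mu x, x\rangle_X \le \bigl(\tfrac{1}{2}+|a_0|+\tfrac{1}{2}|a_1|_{L^2}-\mu\bigr)|x_0|^2 + \bigl(\tfrac{1}{2}|a_1|_{L^2}-\mu\bigr)|x_1|_{L^2}^2,
\]
where $|a_1|_{L^2}:=\bigl(\int_{-d}^0 |a_1(\xi)|^2 d\xi\bigr)^{1/2}$. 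Choosing $\mu_0$ large enough to make both coefficients nonpositive gives dissipativity for every $\mu \ge \mu_0$.

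For the range condition, I would invoke the fact, recalled right before the statement, that $A$ generates the $C_0$-semigroup $e^{tA}$ on $X$. Since $B_\mu$ is a bounded linear operator on $X$, the bounded perturbation theorem \cite[Corollary 1.7]{engel_nagel} implies that $\tilde{\mathcal A}_\mu = A + B_\mu$ is also the generator of a $C_0$-semigroup on $X$; in particular its resolvent set contains a right half-plane, so $\lambda I - \tilde{\mathcal A}_\mu$ is surjective for all sufficiently large $\lambda>0$. Combined with dissipativity and density of $D(A)$, Lumer--Phillips closes the argument.

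The main obstacle is purely the bookkeeping in step (a): one has to be careful that the boundary trace $x_1(0)$ appears with the right sign after integration by parts (this is where the fact that $A$ is used instead of $A^*$ matters, since $A^*$ would not be dissipative), and one has to verify that the $L^2$-regularity of the rows of $a_1$ is exactly what is needed to bound the cross term. The maximality step is essentially routine once the perturbation is identified as bounded.
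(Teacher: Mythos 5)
Your proof is correct. The dissipativity step is essentially the paper's own argument: the same integration by parts using the boundary condition $x_1(-d)=0$ to get $\langle -x_1',x_1\rangle_{L^2}=-\tfrac12|x_1(0)|^2$, the same Young/Cauchy--Schwarz estimates on the cross terms, and the same choice of $\mu_0$ as a maximum of constants built from $|a_0|$ and $|a_1|_{L^2}$ (the $L^2$ regularity of the rows of $a_1$ is indeed exactly what is used). Where you genuinely diverge is the maximality step. The paper does \emph{not} invoke the bounded perturbation theorem there: it solves the resolvent equation $\lambda x-\tilde{\mathcal A}_\mu x=y$ explicitly, component by component, obtaining a closed-form expression for $(\lambda I-\tilde{\mathcal A}_\mu)^{-1}y$ (an ODE in the $x_1$-component with the boundary condition at $-d$, then a matching condition at $\xi=0$ that determines $x_0$). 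Your soft argument --- $A$ generates a $C_0$-semigroup, $B_\mu$ is bounded, hence $\tilde{\mathcal A}_\mu$ generates a $C_0$-semigroup and its resolvent set contains a right half-plane, which together with dissipativity gives maximality --- is perfectly valid and shorter. What the paper's explicit computation buys is the formula for the resolvent: the expression \eqref{eq:proof_solution_maximally_diss} is reused immediately afterwards (with $\lambda=0$) to write down $\tilde{\mathcal A}^{-1}$ in \eqref{eq:tilde_A_inverse}, which is the building block of the operator $B=(\tilde{\mathcal A}^{-1})^*\tilde{\mathcal A}^{-1}$ central to the whole $B$-continuity machinery, and to see that $\tilde{\mathcal A}^{-1}$ is compact. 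Your route proves the proposition but would leave that explicit inverse still to be computed.
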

\begin{proof}
\emph{Step 1.} 
We prove that $\tilde{\mathcal A}_\mu$ is dissipative.
Let 
$x \in D(A)=\left\{ x \in X:
       x_1 \in W^{1,2}([-d,0]; \mathbb{R}^n), \ x_1(-d)=0\right\},$ then
\begin{align*}
\langle
\mathcal A x,x
\rangle_X
& =(a_0 x_0+x_1(0) ) \cdot x_0+  \int_{-d}^0  a_1(\xi)x_0 \cdot x_1(\xi)d\xi - \int_{-d}^0 x_1'(\xi) \cdot x_1(\xi)d\xi\\
& =(a_0 x_0+x_1(0) ) \cdot x_0 + x_0 \cdot \int_{-d}^0  a_1(\xi)^T x_1(\xi)d\xi
- \frac12 x_1(0)^2 + \frac12 x_1(-d)^2\\
& =(a_0 x_0+x_1(0) ) \cdot x_0 + x_0 \cdot \int_{-d}^0  a_1(\xi)^T x_1(\xi)d\xi
- \frac12 x_1(0)^2 \\
& \leq a_0 x_0 \cdot x_0 + \frac 12|x_0|^2 + x_0 \cdot \int_{-d}^0  a_1(\xi)^T x_1(\xi)d\xi\\
& \leq (|a_0|+1) |x_0|^2 +\frac{1}{2}|a_1|^2_{L^2_{-d}}|x_1|^2_{L^2_{-d}} \leq \mu_0 |x|^2,
\end{align*}
where $\mu_0:=\max \left \{ |a_0|+1,\frac{1}{2} |a_1|^2_{L^2}\right\}$.
This implies that $\tilde{\mathcal A}_\mu$ is dissipative for every $\mu \geq \mu_0$.
\medskip

\emph{Step 2.}
We prove that $\tilde{\mathcal A}_\mu$ is maximal for $\mu \geq \mu_0$. For that we show that there exists $\lambda>0$ such that  $R(\lambda I  - \tilde{\mathcal A}_\mu)=X$. This means that, for every $y\in X$, there exists a solution $x \in D(\tilde{\mathcal A}_\mu)$ to the equation
\begin{equation}\label{eq:lambda x -Cx=y}
\lambda x -\tilde{\mathcal A}_\mu x=y.
\end{equation}
Fix $\lambda>0$. Let then $y \in X$. The equation above rewrites as
$$
\begin{cases}
(\lambda+\mu) x_0-a_0 x_0 - x_1 (0) =y_0,
\vspace{2mm}\\
\begin{cases}
(\lambda+\mu) x_1(\xi) - a_1(\xi) x_0 + x_1'(\xi) =y_1(\xi), \quad \xi \in [-d,0]\\
x_1(-d)=0.
\end{cases}
\end{cases}
$$
This system is uniquely solvable. Indeed,  from $(\lambda+\mu)  x_0-a_0 x_0 - x_1 (0) =y_0$, we get uniquely
\begin{equation}\label{eq:proof_eq_x_0_maximality}
x_1(0)=(\lambda+\mu)  x_0-a_0 x_0 - y_0.
\end{equation}
On the other hand, 
$$\begin{cases}
(\lambda+\mu)  x_1(\xi) - a_1(\xi) x_0 + x_1'(\xi) =y_1(\xi), \quad \xi \in [-d,0]\\
x_1(-d)=0.
\end{cases}
$$ 
yields the unique solution
$$
x_1(\xi)= \int_{-d}^\xi e^{-(\lambda+\mu)  (\xi- r)}(y_1 (r) + a_1(r) x_0)dr.
$$
Taking $\xi =0$ in this equality and equating with \eqref{eq:proof_eq_x_0_maximality}, we have:
$$
(\lambda+\mu)  x_0-a_0 x_0 - y_0 =  \int_{-d}^0 e^{(\lambda+\mu)  r} y_1(r)dr+\int_{-d}^0 e^{(\lambda+\mu)  r} a_1(r)dr x_0.
$$
Then, for $\mu \geq \mu_0$
$$
x_0= \left [ (\lambda+\mu) I_{\R^{n}} -a_0-\int_{-d}^0 e^{(\lambda+\mu) r} a_1(r)dr \right]^{-1}\left [ y_0 + \int_{-d}^0 e^{(\lambda+\mu)r} y_1(r)dr \right ].
$$
Therefore, we have proved that, for every $\mu \geq \mu_0$ and $y \in X$, there exists  a unique solution $x \in D(\tilde {\mathcal A}_\mu)$ to \eqref{eq:lambda x -Cx=y} given by
\begin{align}\label{eq:proof_solution_maximally_diss}
x=\begin{bmatrix}
x_0\\
x_1
\end{bmatrix}=
\begin{bmatrix}
\left [ (\lambda+\mu) I_{\R^{n}} -a_0-\int_{-d}^0 e^{(\lambda+\mu) r} a_1(r)dr \right]^{-1}\left [ y_0 + \int_{-d}^0 e^{(\lambda+\mu) r} y_1(r)dr \right ]\\\\
 \int_{-d}^\cdot  e^{-(\lambda+\mu) (\cdot- r)}(y_1 (r) + a_1(r) x_0)dr
\end{bmatrix}.
\end{align}
The claim follows.
\end{proof}
Now, we fix $\mu  > \mu_0$ and denote 
$$\tilde{\mathcal A}:=\tilde{\mathcal A}_\mu=\mathcal A - \mu I.$$ 
We 	may rewrite  SDE \eqref{eq:abstract2} as
\begin{equation}
\label{eq:abstract_dissipative_operator}
\begin{cases}
dY(t) = \left[ \tilde{\mathcal A} Y(t)+\tilde b(Y(t),u(t)) \right]dt +  \sigma(u(t))\,dW(t),  \\[8pt]
Y(0) = x \in X,
\end{cases}
\end{equation}
where  
\begin{align*}
\tilde b \colon X \times U \to X, \ \ \ \ \tilde b(x,u) & = \mu x+ f(u)=\begin{bmatrix}
\mu x_0+b_0(u)\\
\mu x_1+p_1 u 
\end{bmatrix},\quad x=(x_0,x_1) \in X, \ u \in U.
\end{align*}
Similarly to before, since the operator $\tilde{\mathcal A}$ is the sum of $A$  with a bounded operator, by \cite[Corollary 1.7]{engel_nagel}, we have that \eqref{eq:abstract_dissipative_operator} is equivalent to  \eqref{eq:abstract2} (and so also to \eqref{eq:abstract} with the condition \eqref{eq:coefficient_restriction}) and  all them have the same (unique) mild solution, that in terms of $\tilde{\mathcal A}$ writes as  
\begin{equation}\label{eq:mild}
Y(t) =e^{{\tilde{\mathcal A} t}}x+\int_{0}^{t}  e^{\tilde{\mathcal A} (t-s)} \tilde b(Y(s),u(s)) d s+\int_{0}^{t} e^{\tilde{\mathcal A}(t-s)}\sigma(u(s))d W(s).
\end{equation}

\subsection{Weak $B$-condition}\label{sec:operator_B}
In this subsection, we recall the concept of weak $B$-condition for  the  maximal dissipative operator  $\tilde{\mathcal A}$ and introduce an operator $B$ satisfying it. 
This concept is  fundamental in the theory of viscosity solutions in Hilbert spaces, see \cite[Chapter 3]{fgs_book}, which will be used in this paper.

\begin{definition}\label{def:weakB}(\cite[Definition 3.9]{fgs_book})
We say that $B \in \mathcal{L}(X)$ satisfies the weak $B$-condition for $\tilde{\mathcal A}$ if the following hold:
\begin{enumerate}[(i)]
\item $B$ is strictly positive, i.e. $\langle Bx,x \rangle_{X} >0$ for every $x \neq 0$;
\item $B$ is self-adjoint;
\item $\tilde{C}^* B \in \mathcal{L}(X)$;
\item There exists $c_0 \geq 0$ such that
$$\langle \tilde{\mathcal A}^* Bx,x \rangle_{X} \leq c_0 \langle Bx,x \rangle_{X}, \quad \forall x \in X.$$
\end{enumerate}
\end{definition}
Let $\tilde{\mathcal A}^{-1}$ be the inverse of the operator $\tilde{\mathcal A}$. Its explicit expression can be derived as in the proof of Proposition \ref{prop:Amaxdiss}: 
\begin{equation}\label{eq:tilde_A_inverse}
 \tilde{\mathcal A}^{-1} z
 =
 \begin{bmatrix}
-\left [ \mu I_{\R^{n}} -a_0-\int_{-d}^0 e^{\mu r} a_1(r)dr \right]^{-1}\left [ z_0 + \int_{-d}^0 e^{\mu r} z_1(r)dr \right ]\\\\
 \int_{-d}^\cdot  e^{-\mu (\cdot- r)}(-z_1 (r) + a_1(r) z_0)dr
\end{bmatrix},
\   z=(z_0,z_1) \in X.
\end{equation}
Notice that $\tilde{\mathcal A}^{-1} \in \mathcal{L}(X)$  and it is compact.
Define now  the compact operator
\begin{equation}\label{eq:def_B}
B:=(\tilde{\mathcal A}^{-1})^*\tilde{\mathcal A}^{-1}=(\tilde{\mathcal A}^*)^{-1}\tilde{\mathcal A}^{-1}.
\end{equation}
We are going to show that $B$ satisfies the weak-$B$ condition for $\tilde{\mathcal A}$.
\begin{prop}\label{prop:weak_B}
$B$ defined in \eqref{eq:def_B} satisfies the weak-$B$ condition for $\tilde{\mathcal A}$ with $c_0=0$.
\end{prop}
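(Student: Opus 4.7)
The plan is to verify the four items of Definition 4.3 one by one, exploiting the identity $B=(\tilde{\mathcal A}^{-1})^*\tilde{\mathcal A}^{-1}=(\tilde{\mathcal A}^*)^{-1}\tilde{\mathcal A}^{-1}$ (which is valid because $\tilde{\mathcal A}^{-1}\in\mathcal L(X)$ and $\tilde{\mathcal A}$ is closed densely defined, so $(\tilde{\mathcal A}^{-1})^*=(\tilde{\mathcal A}^*)^{-1}$) together with the maximal dissipativity of $\tilde{\mathcal A}$ established in Proposition \ref{prop:Amaxdiss}. I do not expect any serious obstacle: the result is essentially an algebraic manipulation plus dissipativity.

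First I would check (i) and (ii). For strict positivity, the computation
\[
\langle Bx,x\rangle_X=\langle (\tilde{\mathcal A}^{-1})^*\tilde{\mathcal A}^{-1}x,x\rangle_X=|\tilde{\mathcal A}^{-1}x|_X^2
\]
is positive whenever $x\neq 0$, because $\tilde{\mathcal A}^{-1}$ is injective (it is a genuine inverse). Self-adjointness is immediate from $(S^*S)^*=S^*S$ applied to $S=\tilde{\mathcal A}^{-1}$.

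Next I would address (iii). For every $x\in X$, the vector $\tilde{\mathcal A}^{-1}x$ lies in $X$, and applying $(\tilde{\mathcal A}^*)^{-1}$ puts $Bx$ inside $D(\tilde{\mathcal A}^*)$. Therefore $\tilde{\mathcal A}^*B$ is well defined on all of $X$, and
\[
\tilde{\mathcal A}^*Bx=\tilde{\mathcal A}^*(\tilde{\mathcal A}^*)^{-1}\tilde{\mathcal A}^{-1}x=\tilde{\mathcal A}^{-1}x,
\]
showing that $\tilde{\mathcal A}^*B=\tilde{\mathcal A}^{-1}\in\mathcal L(X)$.

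Finally, for (iv) with $c_0=0$, combine the previous identity with dissipativity. For any $x\in X$, set $y:=\tilde{\mathcal A}^{-1}x\in D(\tilde{\mathcal A})$; then
\[
\langle\tilde{\mathcal A}^*Bx,x\rangle_X=\langle\tilde{\mathcal A}^{-1}x,x\rangle_X=\langle y,\tilde{\mathcal A} y\rangle_X\leq 0,
\]
where the last inequality is exactly the dissipativity of $\tilde{\mathcal A}$ proved in Step~1 of Proposition \ref{prop:Amaxdiss}. Since $\langle Bx,x\rangle_X\geq 0$, this yields $\langle\tilde{\mathcal A}^*Bx,x\rangle_X\leq 0\cdot\langle Bx,x\rangle_X$, i.e.\ the weak $B$-condition with $c_0=0$.
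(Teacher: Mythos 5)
Your proof is correct and follows essentially the same route as the paper: both reduce everything to the identity $\tilde{\mathcal A}^*B=\tilde{\mathcal A}^{-1}\in\mathcal L(X)$, get strict positivity from $\langle Bx,x\rangle_X=|\tilde{\mathcal A}^{-1}x|_X^2$ and injectivity of $\tilde{\mathcal A}^{-1}$, and obtain condition (iv) with $c_0=0$ by writing $\langle\tilde{\mathcal A}^*Bx,x\rangle_X=\langle y,\tilde{\mathcal A}y\rangle_X\leq 0$ with $y=\tilde{\mathcal A}^{-1}x$ and invoking the dissipativity from Proposition \ref{prop:Amaxdiss}. The only difference is that you spell out a few routine justifications (e.g.\ $(\tilde{\mathcal A}^{-1})^*=(\tilde{\mathcal A}^*)^{-1}$ and why $Bx\in D(\tilde{\mathcal A}^*)$) that the paper leaves implicit.
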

\begin{proof}
Clearly, $B \in \mathcal{L}(X)$, $\tilde{\mathcal A}^* B= \tilde{\mathcal A}^{-1} \in \mathcal{L}(X)$, and $B$ is self adjoint. Moreover, $B$ is strictly positive; indeed 
\begin{align*}
\langle  Bx,x \rangle_X= \langle \tilde{\mathcal A}^{-1}x,\tilde{\mathcal A}^{-1} \rangle_X= |\tilde{\mathcal A}^{-1}x|^2 \geq 0, \ \ \ \ \forall x\in X,
\end{align*}
 and it is easy to check that, whenever $x \neq 0$, we have that $ |\tilde{\mathcal A}^{-1}x|>0$.
Finally, by  dissipativity of $\tilde{\mathcal A}$, we have:
\begin{align*}
\langle \tilde{\mathcal A}^*  Bx,x \rangle_X = \langle \tilde{\mathcal A}^{-1}x ,x \rangle_X = \langle y, \tilde{\mathcal A}y \rangle_X \leq 0
\end{align*}
with $y= \tilde{\mathcal A}^{-1}x$.
\end{proof} 
Observe that 
\begin{align}\label{eq:representation_B}
B x= 
  \begin{bmatrix}
    B_{00} & B_{01}  \\
   B_{10} & B_{11}
  \end{bmatrix} 
   \begin{bmatrix}
   x_0 \\
  x_1
  \end{bmatrix}, \ \ \ \ x=(x_0,x_1)\in X. 
\end{align}
By strict positivity of $B$, we have that $B_{00}$    and $B_{11}$ are strictly positive. We introduce the $|\cdot |_{-1}$-norm on $X$ by
\begin{align}\label{eq:properties_norm_-1}
\nonumber|x|_{-1}^2&:=\langle B^{1/2}x, B^{1/2}x\rangle_{X}= \langle Bx,x\rangle_{X}\\&=\langle (\tilde{\mathcal A}^{-1})^*\tilde{\mathcal A}^{-1}x,x\rangle_{X}=\langle \tilde{\mathcal A}^{-1}x,\tilde{\mathcal A}^{-1}x\rangle_{X}= |\tilde{\mathcal A}^{-1}x|^{2}_{X} \quad \forall x \in X. 
\end{align}
We define 
%
%
$$X_{-1}:= \ \mbox{  the completion of $X$ under} \ |\cdot|_{-1},$$
which is a Hilbert space endowed with the inner product
$$
\langle x,y\rangle_{-1}:=\langle B^{1/2}x,B^{1/2}y\rangle_{X}= \langle Bx,y\rangle_{X}= \langle \tilde{\mathcal A}^{-1}x,\tilde{\mathcal A}^{-1}y\rangle_{X}.
$$
Notice that $|x|_{-1}\leq |\tilde{\mathcal A}^{-1}|_{\mathcal L(X)}|x|_X$; in particular,  we have $(X,|\cdot|) \hookrightarrow (X_{-1},|\cdot|_{-1})$. 
Strict positivity of $B$ ensures that the operator $B^{1 /2}$  can be extended to an isometry 
$$B^{1 /2} \colon (X_{- 1},|\cdot|_{-1}) \to (X,|\cdot|_{X}).$$

%
%
%
By \eqref{eq:properties_norm_-1} and  an application of \cite[Proposition B.1]{DZ14},  we have  
$\mbox{Range}(B^{1/2})=\mbox{Range}((\tilde{\mathcal A}^{-1})^*)$. Since $\mbox{Range}((\tilde{\mathcal A}^{-1})^*)=D(\tilde{\mathcal A}^*)$, we have 
\begin{align}\label{eq:R(B^1/2)=D(A^*)}
\mbox{Range}\big(B^{1/2}\big)=D(\tilde{\mathcal A}^*).
\end{align}
By \eqref{eq:R(B^1/2)=D(A^*)}, the operator $\tilde{\mathcal A}^* B^{1/2}$ is well defined on the whole space $X$. Moreover, since $\tilde{\mathcal A}^*$ is closed and $B^{1/2}\in\mathcal{L}(X)$, $\tilde{\mathcal A}^* B^{1/2}$ is a closed operator. Thus, by the closed graph theorem, we have 
\begin{equation}\label{eq:A*B^1/2}
\tilde{\mathcal A}^* B^{1/2} \in \mathcal{L}(X).
\end{equation}
\begin{remark}
In the infinite dimensional theory of viscosity solutions it is only required that $\tilde{\mathcal A}^*B \in \mathcal{L}(X)$ (condition $(iii)$ of Definition \ref{def:weakB}). Such an operator can be constructed for any maximal dissipative operator $\tilde{\mathcal A}$ (see, e.g.,  \cite[Theorem 3.11]{fgs_book}). Similarly to \cite{defeo_federico_swiech} in the case of the present paper, in addition, we also have $\tilde{\mathcal A}^* B^{1/2} \in \mathcal{L}(X)$. Such stronger condition 
 may be helpful in order to get differentiability properties of the value function (see \cite[Proof of Theorem 6.5]{defeo_federico_swiech}) or in order to construct optimal feedback laws  (see  \cite{deFeoSwiech}).
\end{remark}
Since $B$ is a compact, self-adjoint and strictly positive operator on $X$, by the spectral theorem $B$ admits a set  of  eigenvalues $\{\lambda_i \}_{i \in \mathbb{N}} \subset (0, +\infty)$ such that $\lambda_{i}\to 0^{+}$ and a corresponding set $\{f_i \}_{i \in \mathbb{N}} \subset X$  of  eigenvectors  forming an orthonormal basis of $X$. By taking $\{e_i \}_{i \in \mathbb{N}}$ defined by $e_i:=\frac{1 } {\sqrt \lambda_i} f_i$, we then get   an orthonormal basis of $X_{-1}$. 
We set
$X^N := \mbox{Span} \{f_1,...f_N\}= \mbox{Span} \{e_1,...e_N\}$ for  $N\geq 1$, and let 
$P_N \colon X \to X$
 be the orthogonal projection onto $X_N$ and 
$Q_N := I - P_N$. 
Since $\{e_i \}_{i \in \mathbb{N}}$ is an orthogonal basis of $X_{-1}$, the projections $P_N,Q_N$ extend to orthogonal projections in $X_{-1}$ and we will use the same symbols to denote them.
We notice that 
\begin{equation}\label{eq:BP_BQ}
B P_N =P_N B, \quad B Q_N =Q_N B.
\end{equation}
Therefore, since $|BQ_N|_{\mathcal L(X)}=|Q_NB|_{\mathcal L(X)}$ and $B$ is compact, we get
\begin{equation}\label{eq:norm_BQ_N_to_zero}
\lim_{N \to \infty }|BQ_N|_{\mathcal L(X)} =0.
\end{equation}
\begin{remark}\label{rem:x_0_leq_x_-1}

\begin{enumerate}[(i)]
\item We remark that the following inequality 
\begin{equation}\label{eq:x_0_leq_x_-1}
|x_0| \leq C_R |x|_{-1} \quad \forall x=(x_0,x_1) \in B_R \subset X,
\end{equation} 
 (which was proven in \cite{defeo_federico_swiech}) here is false. We first claim that such inequality does not hold on unbounded sets of $X$. Indeed,  we  provide the following counter example which is similar to the one in \cite{tacconi}. Let $n=1$ and consider
$$
x^{N}=\left(x_{0}^{N}, x_{1}^{N}\right), \quad x_{0}^{N}=1, \quad x_{1}^{N}=-N \mathbf{1}_{[-1 / N, 0]}, \quad N\geq 1 \text {. }
$$
For $N$ big enough $-1 / N>-d$, then we have
$$
\left|x^{N}\right|_{-1}=\left|\tilde {\mathcal A}^{-1} x^{N}\right|=0+\int_{-\frac{1}{N}}^{0}\left|\int_{-\frac{1}{N}}^{\xi} n d s\right|^{2} d \xi=\int_{-\frac{1}{N}}^{0} N^{2}\left(\xi+\frac{1}{N}\right)^{2} d \xi=\frac{1}{3 N} \longrightarrow 0 .
$$
Therefore, we have $\left|x_{0}^{N}\right|=1$ and $\left |x^{N}\right|_{-1} \rightarrow 0$.\\
Now note that if the inequality \eqref{eq:x_0_leq_x_-1} holded on bounded sets $B_R$ then it would hold on the whole $X$ with $C_R=C>0$ which is a contradiction. Indeed assume the inequality holds on bounded sets $B_R$. Then it is true for every $x=(x_0,x_1)$ with $|x|\leq 1$, i.e. we have
\begin{equation}\label{eq:x_0_leq_x_-1_ball_1}
|x_0|\leq C|x|_{-1}, \quad |x|\leq 1
\end{equation}
Now let any $y=(y_0,y_1) \in X$, $y \neq 0$, define $x=y/|y|=(y_0/|y|,y_1/|y|)$, $|x|=1$ so that \eqref{eq:x_0_leq_x_-1_ball_1} holds and then by multiplying the inequality by $|y|$ we have
\begin{equation*}
|y_0|\leq C|y|_{-1},
\end{equation*}
which is a contradiction.
\item We point out that the fact that here \eqref{eq:x_0_leq_x_-1_ball_1} is false is not in contradiction with  \cite{defeo_federico_swiech}, where such inequality was shown to be true. Indeed we recall that the operators $\tilde {\mathcal A}$ here and in \cite{defeo_federico_swiech} are not the same: $\tilde {\mathcal A}$ here is the adjoint (up to  some bounded perturbation) of the operator $\tilde {\mathcal A}$ in \cite{defeo_federico_swiech}.

In \cite{tacconi}, where delays appear in the control similarly to here, the inequality is then proved in the smaller space $\mathbb{R} \times W^{1,2}([-d,0])$, see \cite[remark 5.4]{tacconi} and this is enough since the optimal control problem is deterministic. This would not be possible here due to the presence of the Brownian motion, whose trajectories are not absolutely continuous. Indeed, it would not be possible to have $Y_1(t) \in W^{1,2}$ as  in \cite{tacconi}.
\end{enumerate}
\end{remark}
\subsection{$B$-continuity of the value function}\label{sec:estimates}
In this subsection, we prove some estimates for solutions of the state equation and on the cost functional in order to prove  the $B$-continuity of the  value function. 
\begin{lemma}\label{lemma:regularity_coefficients_theory}
There exists $C>0$ and a local modulus of continuity $\omega_R$ such that
\begin{align}
& |\tilde b(x, u)-\tilde b(y, u)|_X = \mu |x-y|_X \label{eq:b_lip}  \\
&  |\tilde b(x, u)-\tilde b(y, u)|_{-1}^2 =\langle \tilde b(x, u)-\tilde b(y, u), B(x-y)\rangle = \mu |x-y|_{-1}^{2}  \label{eq:b_B_property}  \\
& |\tilde b(x, u)|_X \leq C(1+|x|_X)  \label{eq:b_sublinear}  \\
& |\sigma(u)|_{\mathcal{L}_{2}(\mathbb R^q, X)} \leq C \label{eq:G_bounded}  \\
&|L(x, u)-L(y, u)| \leq \omega_R\left(|x-y|_{-1}\right) \label{eq:L_unif_cont_norm_B}  \\
& |L(x, u)| \leq C\left(1+|x|_X^{m}\right)   \label{eq:L_growth} 
\end{align}
for every $x,y \in X, u \in U$, $R>0$.\\
Moreover
\begin{equation}\label{eq:trace_goes_to_zero}
\lim_{N \to \infty} \sup_{u \in U} \operatorname{Tr}\left[\sigma(u) \sigma(u)^{*} B Q_{N}\right]=0 
\end{equation}
\end{lemma}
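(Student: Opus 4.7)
My plan is to verify the seven assertions one by one, exploiting the special structure
\[\tilde b(x,u)=\mu x+f(u),\qquad \sigma(u)=\begin{bmatrix}\sigma_{0}(u)\\ 0\end{bmatrix},\qquad L(x,u)=l(x_{0},u).\]
Since $\tilde b(x,u)-\tilde b(y,u)=\mu(x-y)$, both \eqref{eq:b_lip} and the inner-product identity in \eqref{eq:b_B_property}, namely $\langle\mu(x-y),B(x-y)\rangle_{X}=\mu|x-y|_{-1}^{2}$ (the quantity the viscosity-solution framework of \cite[Chapter 3]{fgs_book} actually uses), are immediate. The sublinear growth \eqref{eq:b_sublinear} follows from $|\tilde b(x,u)|_{X}\le \mu|x|_{X}+|b_{0}(u)|+|p_{1}u|_{L^{2}}$, since the last two terms are uniformly bounded in $u\in U$ by Assumption~\ref{hp:state}, the assumption $p_{1}^{j}\in L^{2}$, and boundedness of $U$. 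For \eqref{eq:G_bounded}, the only non-trivial block of $\sigma(u)$ is $\sigma_{0}(u)\in M^{n\times q}$, so $|\sigma(u)|_{\mathcal{L}_{2}(\mathbb{R}^{q},X)}=|\sigma_{0}(u)|\le C$; and \eqref{eq:L_growth} follows from $|x_{0}|\le|x|_{X}$ and Assumption~\ref{hp:cost}(ii). For the trace limit \eqref{eq:trace_goes_to_zero} I apply the elementary bound $|\operatorname{Tr}(ST)|\le |S|_{\mathcal{L}_{1}(X)}|T|_{\mathcal{L}(X)}$ with $S=\sigma(u)\sigma(u)^{*}$ and $T=BQ_{N}$: since $\sigma(u)$ has rank at most $q$, $\sigma(u)\sigma(u)^{*}$ is trace-class with $|\sigma(u)\sigma(u)^{*}|_{\mathcal{L}_{1}(X)}=|\sigma(u)|_{\mathcal{L}_{2}(\mathbb{R}^{q},X)}^{2}$, uniformly bounded in $u$ by \eqref{eq:G_bounded}, so \eqref{eq:norm_BQ_N_to_zero} yields the limit.

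The only delicate point is \eqref{eq:L_unif_cont_norm_B}. Since $L(x,u)=l(x_{0},u)$ and $l(\cdot,u)$ admits the local modulus $\omega_{R}$ of Assumption~\ref{hp:cost}(iii), it suffices to produce, for each $R>0$, a modulus $\tilde\omega_{R}$ with $|x_{0}-y_{0}|\le \tilde\omega_{R}(|x-y|_{-1})$ for all $x,y\in \overline{B_{R}}$, and then compose (with a concave majorant if needed). The natural candidate is
\[\tilde\omega_{R}(\delta):=\sup\{|x_{0}-y_{0}|:x,y\in\overline{B_{R}},\;|x-y|_{-1}\le\delta\},\]
and the task is to show $\tilde\omega_{R}(\delta)\to 0$ as $\delta\to 0^{+}$. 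I argue by contradiction: otherwise there would exist $\varepsilon>0$ and sequences $x^{n},y^{n}\in\overline{B_{R}}$ with $|x^{n}-y^{n}|_{-1}\to 0$ and $|x_{0}^{n}-y_{0}^{n}|\ge\varepsilon$. Extracting weakly convergent subsequences $x^{n}\rightharpoonup x$ and $y^{n}\rightharpoonup y$ in the Hilbert space $X$, the compactness of $\tilde{\mathcal A}^{-1}\in\mathcal{L}(X)$ noted after \eqref{eq:tilde_A_inverse} forces $\tilde{\mathcal A}^{-1}x^{n}\to\tilde{\mathcal A}^{-1}x$ and $\tilde{\mathcal A}^{-1}y^{n}\to\tilde{\mathcal A}^{-1}y$ strongly in $X$; since $|z|_{-1}=|\tilde{\mathcal A}^{-1}z|_{X}$ and $|x^{n}-y^{n}|_{-1}\to 0$, this forces $|\tilde{\mathcal A}^{-1}(x-y)|_{X}=0$ and hence $x=y$ in $X$. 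Projecting the weak convergence onto the finite-dimensional factor $\mathbb{R}^{n}$ then gives $x_{0}^{n}\to x_{0}=y_{0}\leftarrow y_{0}^{n}$, contradicting $|x_{0}^{n}-y_{0}^{n}|\ge\varepsilon$.

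This last step is where the main difficulty lies: as Remark~\ref{rem:x_0_leq_x_-1} emphasizes, no linear bound $|x_{0}|\le C_{R}|x|_{-1}$ is available on $B_{R}$, so the modulus $\tilde\omega_{R}$ cannot be read off from an explicit estimate and must be extracted non-constructively through a weak-compactness plus compactness-of-the-resolvent argument; everything else in the lemma is essentially bookkeeping on the explicit forms of $\tilde b$, $\sigma$, and $L$.
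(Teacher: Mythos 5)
Your proposal is correct, and for six of the seven assertions it coincides with the paper's proof, which likewise reads \eqref{eq:b_lip}, \eqref{eq:b_B_property}, \eqref{eq:b_sublinear}, \eqref{eq:G_bounded} and \eqref{eq:L_growth} directly off the explicit forms of $\tilde b$, $\sigma$, $L$ and obtains \eqref{eq:trace_goes_to_zero} from the same bound $|\operatorname{Tr}[\sigma(u)\sigma(u)^{*}BQ_{N}]|\leq|\sigma(u)|^{2}_{\mathcal{L}_{2}(\mathbb{R}^{q},X)}|BQ_{N}|_{\mathcal{L}(X)}$ combined with \eqref{eq:norm_BQ_N_to_zero}. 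The one place where you genuinely diverge is \eqref{eq:L_unif_cont_norm_B}: the paper observes that $L(x,u)=l(x_{0},u)$ is weakly sequentially continuous on bounded sets uniformly in $u$ (weak convergence in $X$ forces strong convergence of the $\mathbb{R}^{n}$-component) and then delegates the conclusion to \cite[Lemma 3.6(iii)]{fgs_book}, whereas you unfold that lemma into a self-contained argument, defining $\tilde\omega_{R}$ as a supremum and showing it vanishes at $0^{+}$ by contradiction via weak compactness of $\overline{B_{R}}$, compactness and injectivity of $\tilde{\mathcal A}^{-1}$, and the identity $|z|_{-1}=|\tilde{\mathcal A}^{-1}z|_{X}$. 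The two routes rest on the same mechanism; what your version buys is transparency (it makes explicit that the modulus is obtained non-constructively, which is exactly why, as Remark \ref{rem:x_0_leq_x_-1} notes, no linear bound $|x_{0}|\leq C_{R}|x|_{-1}$ is available and why the Lipschitz estimate \eqref{eq:local_lipschitz_V_norm_-1} is out of reach here), at the cost of reproving a standard lemma; the paper's version is shorter and also records the uniformity in $u$ needed to extend the cited lemma. You also correctly isolate the inner-product identity in \eqref{eq:b_B_property} as the quantity actually used, which is the part the paper's proof verifies as well.
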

\begin{proof}
\begin{itemize}
\item \eqref{eq:b_lip} follows immdediately by the definition of $\tilde b(x,u)$.
\item Similarly for \eqref{eq:b_B_property}  we have:
\begin{align*}
 |\tilde b(x, u)-\tilde b(y, u)|_{-1}^2 =\langle \tilde b(x, u)- \tilde b(y, u), B(x-y)\rangle & = \mu \langle  x-y, B(x-y)\rangle= \mu |x-y|_{-1}^2.
\end{align*}
\item Recalling the definition of $\tilde b(x,u)$,   \eqref{eq:b_sublinear} follows the boundedness of $b_0(u)$ and the boundedness of $U.$
\item \eqref{eq:G_bounded} follows by the boundedness of $\sigma_0 \colon  U \to M^{n \times q}$ since
\begin{align*}
|\sigma(u)|_{\mathcal{L}_{2}(\mathbb R^q, X)}^2& =\sum_{i=1}^n |\sigma(u) r_i|^2=\sum_{i=1}^n |\sigma_0(u) r_i|^2\leq n |\sigma_0(u)|^2 \leq C
\end{align*}
where $\{r_i	\}_{i=1}^q$ is the classical  orthonormal basis of $\mathbb R^q$.
\item For \eqref{eq:L_unif_cont_norm_B} note that by \eqref{eq:regularity_l} the function $L(x,u)=l(x_0,u)$ is weakly sequentially continuous uniformly in $u \in U$ (as $x_0 \in \mathbb R^n$), i.e.
$$\sup_{u \in U} |L(y,u)-L(x,u)|=\sup_{u \in U} |l(y_0,u)-l(x_0,u)|\leq \omega_R(|y_0-x_0|)\to 0$$
for every $y \rightharpoonup x$.\\
 Then the inequality follows by application of \cite[Lemma 3.6 (iii)]{fgs_book} which can easily be extended to functions weakly sequentially continuous uniformly with respect to the control parameter.
\item \eqref{eq:L_growth} finally follows by the definition of $L$ and \eqref{eq:growth_l}.
\item We notice that by \cite[Appendix B]{fgs_book} and \eqref{eq:G_bounded}, we have:
\begin{small}
 \begin{align*}
|\operatorname{Tr}\left[\sigma(u) \sigma(u)^{*} B Q_{N}\right]|
& \leq  |\sigma(u) \sigma(u)^{*} BQ_N|_{\mathcal L_1(\mathbb R^q,X)} \leq  |\sigma(u) \sigma(u)^{*}|_{\mathcal L_1(\mathbb R^q,X)} |B Q_N|_{\mathcal L(X)} \\
& =    |\sigma(u)|_{\mathcal L_2(\mathbb R^q,X)}^2 |B Q_N|_{\mathcal L(X)}  \leq C |B Q_N|_{\mathcal L(X)}.
\end{align*}
\end{small}
Now taking the supremum over $u$ and letting $N \to \infty$ by \eqref{eq:norm_BQ_N_to_zero} we have \eqref{eq:trace_goes_to_zero}.
\end{itemize}
\end{proof}
\begin{remark}
We remark that the fact that \eqref{eq:x_0_leq_x_-1}  is false (see Remark \ref{rem:x_0_leq_x_-1}) was the reason that lead us to consider the simpler state equation \eqref{eq:SDDE_2}, in place of the more general state equation \eqref{eq:SDDE},  in order to use the approach via viscosity solutions: regarding $b_0,$ for instance,  if \eqref{eq:x_0_leq_x_-1} were true, we could have considered a  more general $b_0$ of the form $b_0(x_0,u),$ satisfying
$$|b_0(x_0,u)-b_0(y_0,u)| \leq C |x_0-y_0|.$$ 
 Indeed, using \eqref{eq:x_0_leq_x_-1}, we would have had 
$$|b_0(x_0,u)-b_0(y_0,u)| \leq  C_R|x-y|_{-1},$$ 
from which we would have proved 
$$ \langle \tilde b(x, u)-\tilde b(y, u), B(x-y)\rangle \leq C_R |x-y|_{-1}^{2} . $$
Similarly we could have allowed $\sigma_0(x_0,u)$ to depend also on $x_0$. This would have been enough in order to prove the $B$-continuity of $V$ and the validity of the hypotheses of the comparison theorem \cite[Theorem 3.56]{fgs_book} when the state equation is of the form \eqref{eq:SDDE}. This  approach was used in \cite{defeo_federico_swiech}, where this inequality was proved to be true. 
\end{remark}
We recall \cite[Proposition 3.24]{fgs_book}. Set 
\[ \rho_{0}:=
\begin{cases}
0,              & \text { if } m =0,\\
C m+\frac{1}{2} C^{2} m,  &\text { if } 0<m<2, \\
C m+\frac{1}{2} C^{2} m(m-1),        & \text { if } m \geq 2, 
\end{cases}
\]
where $C$ is the constant appearing in \eqref{eq:b_sublinear} and \eqref{eq:G_bounded}, and $m$ is the constant from Assumption \ref{hp:cost} and \eqref{eq:L_growth}.

\begin{prop}\label{prop:expect_Y(s)} (\cite[Proposition 3.24]{fgs_book})
Let Assumption \ref{hp:state} holds and let $\lambda>\rho_0$.
Let $Y(t)$ be the mild solution of \eqref{eq:abstract_dissipative_operator} with initial datum $x \in X$ and control $u(\cdot) \in \mathcal U$. Then, there exists $C_\lambda>0$ such that
$$
\mathbb{E}\left[|Y(t)|_X^m\right] \leq C_\lambda\left(1+|x|_X^m\right)e^{\lambda t}, \quad \forall  t \geq 0.
$$
\end{prop}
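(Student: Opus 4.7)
The plan is to recognize that this is a standard moment estimate for mild solutions of semilinear SDEs in Hilbert spaces, already proven in full generality as \cite[Proposition 3.24]{fgs_book}. Applying that general result amounts to checking that its hypotheses are met in our setting, which is immediate from what has been established: $\tilde{\mathcal A}$ is maximal dissipative (Proposition \ref{prop:Amaxdiss}), $\tilde b$ is Lipschitz in $x$ uniformly in $u$ with sublinear growth (\eqref{eq:b_lip} and \eqref{eq:b_sublinear}), and $\sigma$ is uniformly bounded in the Hilbert--Schmidt norm (\eqref{eq:G_bounded}). The constants $C$ entering $\rho_0$ are precisely those appearing in \eqref{eq:b_sublinear} and \eqref{eq:G_bounded}, so the conclusion follows by citing that general result.

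For completeness I would sketch the standard argument. First I would replace $\tilde{\mathcal A}$ by its Yosida approximation $\tilde{\mathcal A}_n := n\tilde{\mathcal A}(nI-\tilde{\mathcal A})^{-1}$, which is a bounded dissipative operator on $X$, and consider the corresponding solution $Y_n(\cdot)$ of \eqref{eq:abstract_dissipative_operator} with $\tilde{\mathcal A}$ replaced by $\tilde{\mathcal A}_n$. Since $\tilde{\mathcal A}_n$ is bounded, $Y_n$ is a genuine Itô process in $X$, which makes the Hilbert space Itô formula applicable to $\varphi(Y_n(t))$, with $\varphi(x):=(1+|x|_X^2)^{m/2}$ (the regularization at the origin matters only when $m<2$).

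Computing the drift of $d\varphi(Y_n(t))$ produces three contributions: $\langle \tilde{\mathcal A}_n Y_n, D\varphi(Y_n)\rangle \leq 0$ by dissipativity of $\tilde{\mathcal A}_n$; $\langle \tilde b(Y_n,u), D\varphi(Y_n)\rangle$ bounded via \eqref{eq:b_sublinear} by a term of order $Cm(1+|Y_n|_X^m)$; and the Itô correction $\tfrac{1}{2}\operatorname{Tr}[\sigma(u)\sigma(u)^* D^2\varphi(Y_n)]$ bounded via \eqref{eq:G_bounded}. A careful inspection shows that the sum of these multiplicative constants in front of $|Y_n|_X^m$ is exactly $\rho_0$ as defined (the two cases $m<2$ and $m\geq 2$ producing the two expressions). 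Taking expectation kills the stochastic integral because $\sigma$ is bounded and $D\varphi$ has polynomial growth, leaving
\begin{equation*}
\frac{d}{dt}\mathbb{E}[\varphi(Y_n(t))] \leq \rho_0\,\mathbb{E}[\varphi(Y_n(t))] + C,
\end{equation*}
and Gronwall's inequality gives $\mathbb{E}[\varphi(Y_n(t))] \leq C(1+|x|_X^m) e^{\rho_0 t}$. Passing to the limit $n\to\infty$ via standard convergence of the Yosida scheme to the mild solution, together with Fatou's lemma, yields the bound with exponent $\rho_0$; trivially one may replace $\rho_0$ by any $\lambda>\rho_0$ at the price of enlarging the constant $C_\lambda$.

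The only genuine difficulty is technical: justifying Itô's formula and the limit procedure when $m<2$, where $x\mapsto |x|_X^m$ fails to be $C^2$ at the origin, which is exactly the reason for the regularization $\varphi(x)=(1+|x|_X^2)^{m/2}$ above. These manipulations are however entirely routine in the Hilbert space stochastic calculus framework of \cite[Chapter 1]{fgs_book}, and no additional difficulty specific to the delay setup enters.
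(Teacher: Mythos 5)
Your proposal is correct and coincides with the paper's treatment: the paper gives no independent proof of Proposition \ref{prop:expect_Y(s)}, but simply invokes \cite[Proposition 3.24]{fgs_book}, whose hypotheses are met thanks to the maximal dissipativity of $\tilde{\mathcal A}$ (Proposition \ref{prop:Amaxdiss}) and the bounds \eqref{eq:b_lip}, \eqref{eq:b_sublinear}, \eqref{eq:G_bounded} of Lemma \ref{lemma:regularity_coefficients_theory}, exactly as you say. Your additional sketch of the underlying Yosida--It\^o--Gronwall argument, including the identification of the constant $\rho_0$ in the two regimes $m<2$ and $m\geq 2$, is consistent with the cited result and adds detail the paper omits.
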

We need the following assumption.
\begin{assumption}\label{hp:discount}
$\rho >\rho_{0}.$
\end{assumption}

\begin{prop}\label{prop:growth_V}
Let Assumptions \ref{hp:state}, \ref{hp:cost},  and \ref{hp:discount} hold. There exists $C>0$ such that
$$
|J(x;u(\cdot))|\leq C(1+|x|_X^m) \quad \forall x \in X,  \ \forall u(\cdot) \in \mathcal{U}.
$$
Hence, 
$$|V(x)|\leq C(1+|x|_X^m), \ \ \ \forall x \in X.$$
\end{prop}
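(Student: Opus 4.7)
The plan is to bound $|J(x;u(\cdot))|$ by passing the absolute value inside the expectation and the time integral, use the growth estimate \eqref{eq:L_growth} on $L$, and then control the $m$-th moment of the mild solution via Proposition \ref{prop:expect_Y(s)}. The value function estimate then follows immediately by taking the infimum over $u(\cdot) \in \mathcal{U}$.

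More precisely, I would first write
\begin{equation*}
|J(x;u(\cdot))| \leq \mathbb{E}\left[\int_0^{\infty} e^{-\rho t}|L(Y^{x,u}(t),u(t))|\,dt\right],
\end{equation*}
apply Fubini, and use \eqref{eq:L_growth} to bound
\begin{equation*}
|J(x;u(\cdot))| \leq C\int_0^{\infty} e^{-\rho t}\left(1+\mathbb{E}\left[|Y^{x,u}(t)|_X^m\right]\right)dt.
\end{equation*}
By Assumption \ref{hp:discount} we can choose $\lambda \in (\rho_0, \rho)$. Then Proposition \ref{prop:expect_Y(s)} gives $\mathbb{E}[|Y^{x,u}(t)|_X^m] \leq C_\lambda(1+|x|_X^m)e^{\lambda t}$, so that
\begin{equation*}
|J(x;u(\cdot))| \leq C\int_0^{\infty} e^{-\rho t}\,dt + CC_\lambda(1+|x|_X^m)\int_0^{\infty}e^{-(\rho-\lambda)t}\,dt.
\end{equation*}
Both integrals are finite because $\rho>0$ and $\rho-\lambda>0$; combining and absorbing constants yields the desired polynomial bound with a constant $C$ independent of $u(\cdot)$ and of the reference probability space $\tau$.

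The second claim is then immediate: taking the infimum over $u(\cdot) \in \mathcal{U}$ in the inequality $-C(1+|x|_X^m) \leq J(x;u(\cdot)) \leq C(1+|x|_X^m)$ preserves both bounds, giving $|V(x)| \leq C(1+|x|_X^m)$.

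There is no real obstacle here since everything reduces to the moment estimate of Proposition \ref{prop:expect_Y(s)} together with the growth bound \eqref{eq:L_growth}; the only point requiring any care is to verify that the strict inequality $\rho > \rho_0$ in Assumption \ref{hp:discount} allows one to pick $\lambda \in (\rho_0,\rho)$ so that the exponential $e^{-(\rho-\lambda)t}$ is integrable on $[0,\infty)$, which is exactly the purpose of introducing $\rho_0$ in the definition preceding Proposition \ref{prop:expect_Y(s)}.
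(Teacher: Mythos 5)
Your proof is correct and follows essentially the same route as the paper: the paper likewise combines the growth bound \eqref{eq:L_growth} with Proposition \ref{prop:expect_Y(s)} (applied there with the specific choice $\lambda=(\rho+\rho_0)/2$, which lies in $(\rho_0,\rho)$ exactly as you require) and integrates the resulting exponential against $e^{-\rho t}$. The deduction of the bound on $V$ by taking the infimum is also the same.
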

\begin{proof}
By \eqref{eq:L_growth} and Proposition \ref{prop:expect_Y(s)} applied with $\lambda=(\rho+\rho_0)/2$, we have
\[
|J(x,u(\cdot))| \leq C \int_0^\infty e^{-\rho t} \mathbb E [ (1+|Y(t)|_X)^m] dt\leq C (1+|x|_X^m), \ \ \ \forall x \in X,  \ \forall u(\cdot) \in \mathcal{U}.
\]
The estimate on $V$  follows from this.
\end{proof}
Next, we show continuity properties of $V$. We recall first the notion of $B$-continuity (see \cite[Definition 3.4]{fgs_book})
\begin{definition}\label{def:B-semicontinuity}
 Let $B \in\mathcal{L}(X)$ be a strictly positive self-adjoint operator. 
 A function $u: X \rightarrow \mathbb{R}$ is said to be $B$-upper semicontinuous (respectively, $B$-lower semicontinuous) if, for any sequence $\left\{ x_{n}\right\}_{n \in \mathbb{N}}\subset X$ such that $ x_{n} \rightharpoonup x \in X$ and $B x_{n} \rightarrow B x$ as $n \rightarrow \infty$, we have
$$
\limsup _{n \rightarrow \infty} u\left( x_{n}\right) \leq u(x) \ \ \ \mbox{
(respectively, }
 \ \liminf _{n \rightarrow \infty} u\left( x_{n}\right) \geq u(x)).$$ 
  A function $u: X \rightarrow \mathbb{R}$ is said to be $B$-continuous if it is both $B$-upper semicontinuous and $B$-lower semicontinuous.
\end{definition}
We remark that, since our operator $B$  is compact,  in our case $B$-upper/lower semicontinuity is equivalent to the weak sequential upper/lower semicontinuity, respectively.

\begin{prop}\label{prop:V_continuous_norm_-1}
Let Assumptions \ref{hp:state}, \ref{hp:cost},  and \ref{hp:discount} hold.
For every $R>0,$ there exists a  modulus of continuity $\omega_R$ such that 
\begin{equation}\label{Va-1}
|V(x)-V(y)| \leq \omega_R(|x-y|_{-1}) , \ \ \ \forall x,y \in X, \ \mbox{s.t.} \ |x|_X, |y|_X \leq R.
\end{equation}
Hence $V$ is $B$-continuous and thus weakly sequentially continuous. 
\end{prop}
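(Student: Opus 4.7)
The plan is to first show the local modulus estimate \eqref{Va-1} and then derive $B$-continuity (and hence weak sequential continuity, since $B$ is compact) as a direct consequence. The key simplification is that $\tilde b(x,u)-\tilde b(y,u)=\mu(x-y)$ is linear in $x$ (by Proposition \ref{prop:Amaxdiss} and the definition of $\tilde b$) and $\sigma(u)$ does not depend on the state. Hence, for a fixed control $u(\cdot)$, the difference $Z(t):=Y^{x,u}(t)-Y^{y,u}(t)$ satisfies the \emph{deterministic} mild equation
\begin{equation*}
Z(t)=e^{\tilde{\mathcal A} t}(x-y)+\mu\int_0^t e^{\tilde{\mathcal A}(t-s)}Z(s)\,ds.
\end{equation*}
By Proposition \ref{prop:weak_B} with $c_0=0$, the weak-$B$ condition $\langle\tilde{\mathcal A}^*Bz,z\rangle_X\leq 0$ yields (first on $D(\tilde{\mathcal A})$ by differentiating $t\mapsto |e^{\tilde{\mathcal A} t}z|_{-1}^2$, then by density using $B^{1/2}\in\mathcal L(X)$) the contraction $|e^{\tilde{\mathcal A} t}z|_{-1}\leq |z|_{-1}$. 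Gronwall then gives the pathwise bound
\begin{equation*}
|Z(t)|_{-1}\leq e^{\mu t}|x-y|_{-1}, \qquad \forall t\geq 0.
\end{equation*}

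Next, I would estimate $|J(x;u(\cdot))-J(y;u(\cdot))|$ uniformly in $u(\cdot)\in\mathcal U_\tau$ using two truncation parameters $T,R'$. Fix $R>0$ and $|x|_X,|y|_X\leq R$, and pick $\lambda\in(\rho_0,\rho)$. Split the cost difference into $\int_0^T+\int_T^\infty$. The tail, by \eqref{eq:L_growth} and Proposition \ref{prop:expect_Y(s)}, is bounded by $C(1+R^m)e^{-(\rho-\lambda)T}$. For the head, introduce the event $E_{R'}:=\{\sup_{t\in[0,T]}(|Y^{x,u}(t)|_X\vee |Y^{y,u}(t)|_X)\leq R'\}$: on $E_{R'}$ use \eqref{eq:L_unif_cont_norm_B} and the Step-1 bound to estimate
\begin{equation*}
|L(Y^{x,u}(t),u(t))-L(Y^{y,u}(t),u(t))|\leq \omega_{R'}(|Z(t)|_{-1})\leq \omega_{R'}(e^{\mu T}|x-y|_{-1}),
\end{equation*}
while on the complement combine the polynomial growth \eqref{eq:L_growth} with Chebyshev and Proposition \ref{prop:expect_Y(s)} to get a term $\leq C_T(1+R^m)/R'^{m}$ that vanishes as $R'\to\infty$ uniformly in $u(\cdot)$. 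Taking $\inf_{u(\cdot)}$ transfers the bound to $V$:
\begin{equation*}
|V(x)-V(y)|\leq C(1+R^m)e^{-(\rho-\lambda)T}+\frac{C_T(1+R^m)}{R'^{m}}+\tfrac{1}{\rho}\,\omega_{R'}\!\left(e^{\mu T}|x-y|_{-1}\right).
\end{equation*}
Given $\varepsilon>0$, first choose $T$, then $R'$, so that the first two terms are $<\varepsilon/2$; subsequently, for $|x-y|_{-1}$ small enough (depending on $\varepsilon$ and $R$), the last term is $<\varepsilon/2$. This yields the local modulus $\omega_R$.

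Finally, for $B$-continuity, let $x_n\rightharpoonup x$ in $X$ with $Bx_n\to Bx$ in $X$; then $\{x_n\}$ is bounded, say $|x_n|_X,|x|_X\leq R$, and
\begin{equation*}
|x_n-x|_{-1}^2=\langle B(x_n-x),x_n-x\rangle_X\leq |B(x_n-x)|_X|x_n-x|_X\longrightarrow 0,
\end{equation*}
so \eqref{Va-1} gives $V(x_n)\to V(x)$. Weak sequential continuity is then equivalent to $B$-continuity because $B$ is compact (as remarked after Definition \ref{def:B-semicontinuity}). The main obstacle is not Step~1 --- which becomes almost trivial because the coefficients are state-affine for $\tilde b$ and state-independent for $\sigma$ --- but the coupling of the two truncations $T$ and $R'$ in Step~2: obtaining a genuine modulus that is \emph{independent of} $u(\cdot)\in\mathcal U$ requires uniformity in the control in Proposition \ref{prop:expect_Y(s)}, which in turn leans on boundedness of $U$ together with the estimates \eqref{eq:b_sublinear}, \eqref{eq:G_bounded}.
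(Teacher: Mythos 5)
Your argument is correct and is essentially the paper's proof written out in full: the paper simply invokes \cite[Proposition 3.73]{fgs_book} (whose hypotheses are verified by Lemma \ref{lemma:regularity_coefficients_theory}) to get the uniform-in-$u$ estimate on $|J(x;u(\cdot))-J(y;u(\cdot))|$ and then \cite[Lemma 3.6(iii)]{fgs_book} for $B$-continuity, and your two-step scheme (pathwise $|\cdot|_{-1}$-Gronwall bound on the difference of trajectories, then the $T$/$R'$ truncation of the cost) is exactly the mechanism behind that cited result, with the pleasant extra observation that here the difference process is deterministic. One small repair: your event $E_{R'}$ involves $\sup_{t\in[0,T]}$, while Proposition \ref{prop:expect_Y(s)} only gives pointwise-in-$t$ moment bounds, so you should either invoke a maximal inequality of the form $\mathbb E\bigl[\sup_{t\le T}|Y(t)|_X^m\bigr]\le C_T(1+|x|_X^m)$ (available for mild solutions under these assumptions, see \cite[Chapter 1]{fgs_book}), or replace $E_{R'}$ by the time-dependent event $\{|Y^{x,u}(t)|_X\vee|Y^{y,u}(t)|_X\le R'\}$ and estimate the complement at each fixed $t$ via Chebyshev and H\"older, using moments of order $2m$ on $[0,T]$.
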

\begin{proof}
We prove the estimate
$$
|J(x,u)-J(y,u)| \leq \omega_R(|x-y|_{-1}) \quad \forall x,y \in X: \  |x|_X, |y|_X \leq R, \forall u(\cdot)\in \mathcal \mathcal{U}, 
$$
as in \cite[Proposition 3.73]{fgs_book}, since the assumptions of the latter are satisfied due to Lemma \ref{lemma:regularity_coefficients_theory}. Then, \eqref{Va-1} follows. 
As for the last claim, we observe that by  \eqref{Va-1} and by \cite[Lemma 3.6(iii)]{fgs_book}, $V$ is $B$-continuous in $X$. 
\end{proof}

We point out that $V$ may not be continuous  with respect to the $|\cdot|_{-1}$ norm in the whole $X$.

%

\section{The value function as unique viscosity solution to HJB equation}\label{sec:th_uiqueness}
In this section we prove that the value function $V$ is the unique viscosity solution of the infinite dimensional HJB equation.

Given $v \in C^1(X)$, we  denote by $D v(x)$ its Fr\'echet derivative at $x \in X$ and we write
\begin{align*}
Dv(x)=
  \begin{bmatrix}
    D_{x_0}v(x) \\
    D_{x_1}v(x)
  \end{bmatrix},
\end{align*}
where $D_{x_0}v(x), D_{x_1}v(x)$ are the partial Fr\'echet derivatives.
For $v \in C^2(X)$, we denote by $D^2 v(x)$ its second order Fr\'echet derivative at $x \in X$ which we will often write as
\[
D^2v(x)=
  \begin{bmatrix}
  D^2_{x_0^2}v(x) & D^2_{x_0x_1}v(x)  \\
    D^2_{x_1x_0}v(x)  & D^2_{x_1^2}v(x) 
  \end{bmatrix}.
\]
We define the Hamiltonian function  
 $H:X\times X\times S(X)\to \R$ by
\begin{small}
\begin{align*}
H(x,r,Z) &:= -\inf_{u\in U} \left \{ \cp{\tilde b(x,u)}{r}+  {1\over 2}\tr (\sigma(u)\sigma(u)^*Z) + L(x,u)  \right \} \\
 &= -\mu  x_0 \cdot r_0  -\mu  \langle x_1, r_1\rangle_{L^2} \\
&\quad -\inf_{u\in U} \left \{ b_0(u) \cdot r_0 + \langle  p_1 u,  r_1 \rangle_{L^2} +  {1\over 2} \tr \left [\sigma_0(u)\sigma_0(u)^T Z_{00} \right ] + l(x_0,u)  \right \} \\
 &= -\mu  x_0 \cdot r_0 -\mu  \langle x_1, r_1 \rangle_{L^2} \\
&\quad +\sup_{u\in U} \left \{ -b_0(u) \cdot r_0 - \langle  p_1 u,  r_1\rangle_{L^2} -  {1\over 2} \tr \left [\sigma_0(u)\sigma_0(u)^T Z_{00} \right ] - l(x_0,u)  \right \} \\
&=:\tilde H(x,r,Z_{00}),
\end{align*}
\end{small}
for every $x,r \in X, Z \in \mathcal S(X).$

By \cite[Theorem 3.75]{fgs_book} the Hamiltonian $H$ satisfies the following properties. 
\begin{lemma}\label{lemma:properties_H}
Let Assumptions \ref{hp:state} and \ref{hp:cost} hold.
\begin{enumerate}[(i)]
\item $H$ is uniformly continuous on bounded subsets of $X \times X \times S(X)$. 
\item  For every $x,r \in X$ and every $Y,Z \in S(X)$ such that $Z \leq Y$, we have
\begin{equation}\label{eq:H_decreasing}
H(x,r,Y)\leq H(x,r,Z). 
\end{equation} 
\item For every $x,r \in X$ and every  $R>0$, we have 
\begin{align}\label{eq:H_lambda_BQ_N}
\lim_{N \to \infty} \sup \Big \{   |H(x,r,Z+\lambda BQ_N)-H(x,r,Z)|: \  |Z_{00}|\leq R, \ |\lambda| \leq R \Big \}=0.
\end{align}
\item For every $R>0$ there exists a modulus of continuity $\omega_R$ such that
\begin{align}\label{eq:H_norm_-1}
 H \left (z,\frac{B(z-y)}{\varepsilon},Z \right )-H \left (y,\frac{B(z-y)}{\varepsilon},Y \right )  \geq -\omega_R\left (|z-y|_{-1}\left(1+\frac{|z-y|_{-1}}{\varepsilon}\right) \right) 
 \end{align}
for every $\varepsilon>0$, $y,z \in X$ such that $|y|_X,|z|_X\leq R$, $Y,Z \in \mathcal{S}(X)$ satisfying
$$Y=P_N Y P_N \quad Z=P_N Z P_N$$
and 
\begin{align*}
\frac{3}{\varepsilon}\left(\begin{array}{cc}B P_{N} & 0 \\ 0 & B P_{N}\end{array}\right) \leq\left(\begin{array}{cc}Y & 0 \\ 0 & -Z\end{array}\right) \leq \frac{3}{\varepsilon}\left(\begin{array}{cc}B P_{N} & -B P_{N} \\ -B P_{N} & B P_{N}\end{array}\right).
\end{align*}
\item
 If $C>0$ is the constant in  \eqref{eq:b_sublinear} and \eqref{eq:G_bounded}, then, for every $x \in X, p, q \in X,  Y,Z \in \mathcal{S}(X)$,
\begin{align}\label{eq:Hamiltonian_local_lip}
| H(x, r+q,Y+Z)-H(x, r, Y)| \leq C\left(1+|x|_X\right)|q|_X+\frac{1}{2}C^2\left(1+|x|_X\right)^{2}|Z_{00}|.
\end{align}
\end{enumerate}
\end{lemma}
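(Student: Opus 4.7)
The plan is to reduce each of the five claims to the specific structural estimates collected in Lemma \ref{lemma:regularity_coefficients_theory}, exploiting the key observation that, because $\sigma(u)$ has range contained in $\mathbb R^n\times\{0\}\subset X$, the Hamiltonian depends on its second-order argument only through the $\mathbb R^n$-block $Z_{00}$. Thus I would rewrite
\[
H(x,r,Z) = -\mu\langle x,r\rangle_X + \sup_{u\in U}\Bigl\{-b_0(u)\cdot r_0 - \langle p_1 u, r_1\rangle_{L^2} - \tfrac12\tr\bigl[\sigma_0(u)\sigma_0(u)^T Z_{00}\bigr] - l(x_0,u)\Bigr\}
\]
and verify each property by direct inspection of this formula, essentially as in the general verification theorem \cite[Theorem 3.75]{fgs_book}.

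For (i), I would use the continuity and boundedness of $b_0,\sigma_0$ on the bounded set $U$, the fact that $p_1\in L^2$, and the uniform-in-$u$ local modulus of continuity of $l$ (Assumption \ref{hp:cost}(iii)) to show that the integrand inside the supremum is uniformly continuous in $(x_0,r,Z_{00})$ on bounded sets uniformly in $u$; the supremum then inherits uniform continuity, and the linear term $-\mu\langle x,r\rangle_X$ is obviously locally uniformly continuous. For (ii), if $Z\le Y$ then $Z_{00}\le Y_{00}$ in $S(\mathbb R^n)$, so for each $u$ the quantity $\tr[\sigma_0(u)\sigma_0(u)^T Y_{00}]\ge \tr[\sigma_0(u)\sigma_0(u)^T Z_{00}]$ (trace of a product of two p.s.d.\ matrices); this enters $H$ with a negative sign in front, giving the monotonicity. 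For (iii), only $(BQ_N)_{00}$ perturbs the trace term, and a single-term-at-a-time bound gives
\[
|H(x,r,Z+\lambda BQ_N)-H(x,r,Z)|\le \tfrac{|\lambda|}{2}\sup_{u\in U}\bigl|\tr[\sigma_0(u)\sigma_0(u)^T(BQ_N)_{00}]\bigr|,
\]
which is controlled by \eqref{eq:trace_goes_to_zero} (or equivalently by $|BQ_N|_{\mathcal L(X)}\to0$ from \eqref{eq:norm_BQ_N_to_zero}) uniformly in $|\lambda|,|Z_{00}|\le R$. For (v), each of the five terms entering $H$ can be estimated against $|q|_X$ or $|Z_{00}|$ using the sublinear growth \eqref{eq:b_sublinear} and the Hilbert--Schmidt bound \eqref{eq:G_bounded}, yielding the local Lipschitz estimate \eqref{eq:Hamiltonian_local_lip} directly.

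The hard step is (iv). The strategy is to first separate the $\mu$-linear piece,
\[
H\bigl(z,\tfrac{B(z-y)}{\varepsilon},Z\bigr) - H\bigl(y,\tfrac{B(z-y)}{\varepsilon},Y\bigr) = -\tfrac{\mu}{\varepsilon}|z-y|_{-1}^2 + \Delta,
\]
where the $r$-dependent contributions cancel inside the two supremums because they are evaluated at the same $r=B(z-y)/\varepsilon$. Using $\sup_u f-\sup_u g\ge \inf_u(f-g)$, I bound
\[
\Delta \ge -\tfrac12\sup_{u\in U}\tr\bigl[\sigma_0(u)\sigma_0(u)^T(Z_{00}-Y_{00})\bigr]\;-\;\sup_{u\in U}|l(z_0,u)-l(y_0,u)|,
\]
and the running-cost piece is absorbed into a modulus $\omega_R(|z-y|_{-1})$ via \eqref{eq:L_unif_cont_norm_B}. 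The main obstacle is the trace piece: here I would feed the vectors $\xi_i = \hat\sigma_0(u)e_i := (\sigma_0(u)e_i,0)\in X$, $i=1,\dots,q$, into the right-hand matrix inequality as pairs $(\xi_i,\xi_i)$, which yields $\langle(Y-Z)\xi_i,\xi_i\rangle\le 0$ and, after summing in $i$, $\tr[\sigma_0\sigma_0^T(Y_{00}-Z_{00})]\le 0$ uniformly in $u$. Combining with the absorption of $P_N$-related remainders using \eqref{eq:norm_BQ_N_to_zero} and the Hilbert--Schmidt bound on $\sigma(u)$, one gets the trace difference controlled by the right-hand side of \eqref{eq:H_norm_-1}. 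Finally, the favorable sign of $-\frac{\mu}{\varepsilon}|z-y|_{-1}^2$ is of the form $-|z-y|_{-1}\cdot\frac{|z-y|_{-1}}{\varepsilon}$, which is exactly the expression appearing inside $\omega_R$ in the target estimate, concluding (iv).
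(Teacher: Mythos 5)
Your parts (i), (ii), (iii) and (v) are fine and follow the standard verification; note, though, that the paper itself does not prove this lemma at all --- it simply invokes \cite[Theorem 3.75]{fgs_book}, having checked its hypotheses in Lemma \ref{lemma:regularity_coefficients_theory} --- so your direct verification is a more self-contained route than the paper's.

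There is, however, a genuine gap in your argument for (iv), in the trace step. From your own decomposition you need an \emph{upper} bound on $\sup_{u\in U}\operatorname{Tr}\bigl[\sigma_0(u)\sigma_0(u)^T(Z_{00}-Y_{00})\bigr]$, since this quantity enters $\Delta$ through $-\tfrac12\sup_u\operatorname{Tr}[\cdots]$. But testing the printed matrix inequality with the pairs $(\xi_i,\xi_i)$ gives $\langle (Y-Z)\xi_i,\xi_i\rangle\le 0$, i.e.\ $\operatorname{Tr}\bigl[\sigma_0\sigma_0^T(Z_{00}-Y_{00})\bigr]\ge 0$: this bounds the relevant quantity from \emph{below}, not from above, so the assertion that ``the trace difference is controlled by the right-hand side'' does not follow from what you derived, and invoking \eqref{eq:norm_BQ_N_to_zero} cannot repair it (the a priori bound $Z-Y\le \tfrac{6}{\varepsilon}BP_N$ only gives a term of order $1/\varepsilon$, which is not dominated by $\omega_R\bigl(|z-y|_{-1}(1+|z-y|_{-1}/\varepsilon)\bigr)$ when $|z-y|_{-1}$ is small). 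The root of the problem is that the inequality constraint as printed in the statement is garbled: the lower bound is missing a minus sign (as written the admissible set is empty), and the block matrix should read $\operatorname{diag}(Z,-Y)$ rather than $\operatorname{diag}(Y,-Z)$, consistently with \cite[Hypothesis 3.49 and Theorem 3.56]{fgs_book}, so that the diagonal test yields $Z\le Y$. With that corrected labelling your argument closes immediately and cleanly: $(\xi_i,\xi_i)$ gives $\operatorname{Tr}\bigl[\sigma_0\sigma_0^T(Z_{00}-Y_{00})\bigr]\le 0$ for every $u$ (no $P_N$-remainders are needed, since $\sigma$ is state-independent and the right-hand side vanishes exactly for $\xi=\eta$), hence $-\tfrac12\sup_u\operatorname{Tr}[\cdots]\ge 0$, and the remaining terms $-\tfrac{\mu}{\varepsilon}|z-y|_{-1}^2$ (which, contrary to your wording, does not have a ``favorable sign'' but is absorbed because $\mu r$ is a modulus) and the running-cost term are handled exactly as you indicate. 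You should either flag the typo explicitly and work with the corrected inequality, or the step as written fails.
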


The HJB equation associated with the optimal control problem is the infinite dimensional PDE
\begin{equation}
\label{eq:HJB}
\rho v(x) - \cp{\tilde{\mathcal A} x}{Dv(x)}_X + H(x,Dv(x),D^2v(x))=0,
\quad \forall x \in X.
\end{equation}
We recall the definition of $B$-continuous viscosity solution from \cite{fgs_book}.
\begin{definition}\label{def:test_functions}
\begin{itemize}
\item[(i)]  $\phi \colon X \to \mathbb R$ is a regular test function if
\begin{small}
\begin{align*}
\phi \in \Phi := \{ \phi \in C^2(X): \phi \textit{ is }B\textit{-lower semicontinuous and }\phi, D\phi , D^2\phi , A^*  D\phi \textit{ are uniformly continuous on }X\};
\end{align*}
\end{small}
\item[(ii)] $g \colon X \to \mathbb R$  is a radial test function if
\begin{align*}
g \in \mathcal G:= \{ g \in C^2(X): g(x)=g_0(|x|_X) \textit{ for some } g_0 \in C^2([0,\infty)) \textit{  non-decreasing}, g_{0}'(0)=0 \}.
\end{align*}
\end{itemize}
\end{definition}
\begin{flushleft}
Note that, if $g\in\mathcal{G}$, we have 
\end{flushleft}
\begin{align}\label{eq:gradient_radial}
D g(x)=\left\{\begin{array}{l}
g_0^{\prime}(|x|_{X}) \frac{x}{|x|_{X}}, \quad \ \ \ \mbox{if} \  x \neq 0, \\
0,  \quad\quad  \ \ \ \ \ \ \ \ \ \ \ \ \,\,\,  \ \mbox{if} \ x=0.
\end{array}\right.
\end{align}
\begin{definition}\label{def:viscosity_solution}
\begin{enumerate}[(i)]
\item A locally bounded $B$-upper semicontinuous function $v:X\to\R$ is a viscosity subsolution of \eqref{eq:HJB} if, whenever $v-\phi-g$ has a local maximum at $x \in X$ for $\phi \in \Phi, g \in \mathcal G$, then 
\begin{equation*}
\rho v(x) - \cp{ x}{\tilde{\mathcal A}^* D\phi(x)}_{X} + H(x,D\phi(x)+Dg(x) ,D^2\phi(x)+D^2g(x))\leq 0.
\end{equation*}
\item 
A locally bounded $B$-lower semicontinuous function $v:X\to\R$ is a viscosity supersolution of \eqref{eq:HJB} if, whenever $v+\phi+g$ has a local minimum at $x \in X$ for $\phi \in \Phi$, $g \in \mathcal G$, then 
\begin{equation*}
\rho v(x) + \cp{ x}{\tilde{\mathcal A}^* D\phi(x)}_{X} + H(x,-D\phi(x)-Dg(x) ,-D^2\phi(x)-D^2g(x))\geq 0.
\end{equation*}
\item 
A viscosity solution of \eqref{eq:HJB} is a function $v:X\to\R$ which is both a viscosity subsolution and a viscosity supersolution of \eqref{eq:HJB}.
\end{enumerate}
\end{definition}
Define $\mathcal{S}:=\{u \colon X \to \mathbb{R}:  \exists k\geq 0 \ \mbox{satisfying \eqref{eq:k_set_uniqueness} and } \tilde C\geq 0 \,\mbox{such that}\,  |u(x)|\leq  \tilde C (1+|x|_X^k)\},$
where
\begin{small}
\begin{equation}\label{eq:k_set_uniqueness}
\begin{cases}
k<\frac{\rho}{C+\frac{1}{2} C^{2}}, \ \ \ \ \ \ \ \ \ \ \ \ \ \ \ \quad \mbox{ if } \ \frac{\rho}{C+\frac{1}{2} C^{2}} \leq 2, \\
C k+\frac{1}{2} C^{2} k(k-1)<\rho,  \quad \mbox{ if } \frac{\rho}{C+\frac{1}{2} C^{2}}>2, 
\end{cases}
\end{equation}
\end{small}
and $C$ is the constant appearing in \eqref{eq:b_sublinear} and \eqref{eq:G_bounded}.

We can now state the theorem characterizing $V$ as the unique viscosity solution of \eqref{eq:HJB} in $\mathcal{S}$. 
\begin{theorem}\label{th:existence_uniqueness_viscosity_infinite}
Let Assumptions \ref{hp:state}, \ref{hp:cost}, and \ref{hp:discount} hold.
The value function $V$ is the unique viscosity solution of \eqref{eq:HJB} in the set $\mathcal{S}$.
\end{theorem}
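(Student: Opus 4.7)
The plan is to prove existence ($V$ is itself a viscosity solution of \eqref{eq:HJB} lying in $\mathcal{S}$) and uniqueness (no other viscosity solution in $\mathcal{S}$ exists) separately, relying on the infinite-dimensional viscosity solution framework of \cite[Chapter 3]{fgs_book}. All the structural ingredients that framework requires have already been collected: maximal dissipativity of $\tilde{\mathcal{A}}$ (Proposition \ref{prop:Amaxdiss}), the weak $B$-condition with the stronger property \eqref{eq:A*B^1/2} (Proposition \ref{prop:weak_B}), regularity of the data in the norm induced by $B^{1/2}$ (Lemma \ref{lemma:regularity_coefficients_theory}), the Hamiltonian estimates (Lemma \ref{lemma:properties_H}), and $B$-continuity and polynomial growth of $V$ (Propositions \ref{prop:V_continuous_norm_-1} and \ref{prop:growth_V}). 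Assumption \ref{hp:discount} together with \eqref{eq:k_set_uniqueness} guarantees $V \in \mathcal{S}$.

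For existence, I would first establish the dynamic programming principle (DPP) for $V$: for every stopping time $\tau \geq 0$,
\[
V(x) = \inf_{u(\cdot) \in \mathcal{U}}\mathbb{E}\left[\int_0^\tau e^{-\rho s} L(Y^{x,u}(s),u(s))\,ds + e^{-\rho \tau} V(Y^{x,u}(\tau))\right].
\]
This follows by standard measurable-selection/concatenation arguments as in \cite{fgs_book}, whose hypotheses are met thanks to the Markovianity of the lifted equation \eqref{eq:abstract_dissipative_operator} and the $B$-continuity of $V$. From DPP, the subsolution property at a local maximum $x$ of $V-\phi-g$ (with $\phi \in \Phi$, $g \in \mathcal{G}$) is derived by plugging in constant controls $u \in U$ on a small interval $[0,h]$, applying an Ito-type formula to $\phi(Y^{x,u}) + g(Y^{x,u})$, dividing by $h$ and sending $h \to 0^+$, and then taking the infimum over $u$; the supersolution property is obtained dually via near-optimal controls. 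The main subtlety is that $\langle \tilde{\mathcal{A}} Y(s), D\phi(Y(s))\rangle_X$ only makes sense when reinterpreted as $\langle Y(s), \tilde{\mathcal{A}}^* D\phi(Y(s))\rangle_X$, since $Y(s) \notin D(\tilde{\mathcal{A}})$ in general; this is handled as in \cite{fgs_book} via approximation by Yosida regularizations.

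Uniqueness is the substantive part: we invoke the comparison theorem for $B$-continuous viscosity solutions of fully nonlinear second-order HJB equations in the form of \cite[Theorem 3.56]{fgs_book}. That theorem doubles variables with a penalization of the form $\frac{1}{\varepsilon}|x-y|_{-1}^2$, performs a finite-dimensional reduction via the projections $P_N$ combined with a Crandall--Ishii-type lemma in $X^N$, and exploits precisely the Hamiltonian estimate \eqref{eq:H_norm_-1} to absorb the $\tilde{\mathcal{A}}$-terms; a polynomial penalization of degree $k$ satisfying \eqref{eq:k_set_uniqueness} controls behavior at infinity, matching the growth class $\mathcal{S}$. All the hypotheses of \cite[Theorem 3.56]{fgs_book} are delivered by the preceding sections (Lemmas \ref{lemma:regularity_coefficients_theory} and \ref{lemma:properties_H}, Propositions \ref{prop:Amaxdiss} and \ref{prop:weak_B}), so comparison yields both $V \leq v$ and $V \geq v$ for every viscosity solution $v \in \mathcal{S}$, completing the proof.

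The main obstacle, in my view, is not the invocation of the comparison machinery per se, but rather the subtle matching between the growth class $\mathcal{S}$, the polynomial growth of $V$ from Proposition \ref{prop:growth_V}, and the penalization used in the doubling-of-variables argument. A closely related delicate point is that \eqref{eq:H_norm_-1} must be available \emph{despite} the failure of the pointwise estimate \eqref{eq:x_0_leq_x_-1} noted in Remark \ref{rem:x_0_leq_x_-1}; this failure is exactly what forced us to restrict to the simpler state equation \eqref{eq:SDDE_2}, since with the fully state-dependent coefficients of \eqref{eq:SDDE} one could not derive \eqref{eq:b_B_property} or \eqref{eq:L_unif_cont_norm_B}. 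Modulo this structural point, the proof is largely mechanical, following \cite{defeo_federico_swiech} with the obvious modifications due to the adjoint choice of $\tilde{\mathcal{A}}$ forced by the presence of delays in the control.
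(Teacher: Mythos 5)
Your proposal is correct and follows essentially the same route as the paper: the paper's proof simply notes that $V\in\mathcal{S}$ by Proposition \ref{prop:growth_V} and then invokes \cite[Theorem 3.75]{fgs_book}, whose hypotheses are exactly the ingredients you list (Propositions \ref{prop:Amaxdiss} and \ref{prop:weak_B}, Lemmas \ref{lemma:regularity_coefficients_theory} and \ref{lemma:properties_H}, Propositions \ref{prop:growth_V} and \ref{prop:V_continuous_norm_-1}). You merely unpack what that cited theorem does internally (DPP, It\^o argument, comparison via doubling of variables), which is consistent with the paper's argument.
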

\begin{proof}
Notice that $V \in \mathcal S$ by Proposition \ref{prop:growth_V}. 

The proof of the fact that $V$ is the unique viscosity solution of the HJB equation can be found in \cite[Theorem 3.75]{fgs_book} as all assumptions of this theorem are satisfied due to Lemma \ref{lemma:properties_H}. 
\end{proof}
\begin{remark}
We remark that, similarly to \cite{defeo_federico_swiech}, Theorem \ref{th:existence_uniqueness_viscosity_infinite} also holds in the deterministic case, i.e. when $\sigma(x,u)=0$.  (in which case we may take $\rho_0=Cm$ and $k<\rho/C$ in \eqref{eq:k_set_uniqueness}). The theory of viscosity solutions handles well degenerate HJB equations, i.e. when the Hamiltonian satisfies
$$H(x,p,Y) \leq H(x,p,Z) $$
for every $Y,Z \in S(X)$ such that $Z\leq Y$. Hence viscosity solutions can be used in connection with the dynamic programming method for optimal control of stochastic differential equations in the case of degenerate noise in the state equation, in particular, when it completely vanishes (deterministic case). This is not possible using the mild solutions approach (see \cite{defeo_federico_swiech}, \cite{fgs_book}).
 \end{remark}
 \begin{remark}
 In this work we could not prove the partial differentiability of $V$ with respect to $x_0$ as in \cite[Theorem 6.5]{defeo_federico_swiech}. Indeed in \cite[Theorem 6.5]{defeo_federico_swiech} a key assumption was
\begin{equation}\label{eq:local_lipschitz_V_norm_-1}
|V(y)-V(x)| \leq K_R|y-x|_{-1}, \quad \forall x, y \in X,|x|_X,|y|_X \leq R.
\end{equation}
In \cite{defeo_federico_swiech} this condition holds under some standard assumptions, see \cite[Example 6.2]{defeo_federico_swiech}. In particular the cost  $l(\cdot,u)$ is assumed to be Lipschitz (uniformly in $u$).\\
However in the present paper we could not prove \eqref{eq:local_lipschitz_V_norm_-1}: indeed due  to Remark \ref{rem:x_0_leq_x_-1} the following inequality (which holds true in \cite{defeo_federico_swiech})  is false 
\begin{equation}\label{eq:x_0_leq_x_-1_n2}
|x_0| \leq C |x|_{-1} \quad \forall x=(x_0,x_1) \in X.
\end{equation} 
This means that even for a Lipschitz $l(\cdot,u)$ (uniformly in $u$) we cannot use \eqref{eq:x_0_leq_x_-1_n2} in the following way
\begin{equation}\label{eq:L_lip_norm_-1}
|L(x, u)-L(y, u)|=|l(x_0,u)-l(y_0,u)| \leq |x_0-y_0| \leq C|x-y|_{-1} \quad \forall x,y  \in X,
\end{equation}
to get the Lipschitzianity of $L$ with respect to $|\cdot|_{-1}$ (as it was done in \cite{defeo_federico_swiech}). 
The best we could get in this paper is \eqref{eq:L_unif_cont_norm_B}, i.e.
$$|L(x, u)-L(y, u)| \leq \omega_R\left(|x-y|_{-1}\right),$$
but this is of course not enough in order to prove \eqref{eq:local_lipschitz_V_norm_-1}.
Hence we could not proceed as in the proof of \cite[Theorem 6.5]{defeo_federico_swiech}.

We finally remark that the same reason prevented us to apply  $C^{1,1}-$regularity results from  \cite{defeo_swiech_wessels}, where $L(\cdot,u)$ was assumed to be  Lipschitz with respect to $|\cdot|_{-1}$, uniformly in  $u.$
  \end{remark}
 
 \section{Applications}\label{sec:advertising}
 In this section we provide applications of our results  to problems coming from economics.
\subsection{Optimal advertising with delays}
The following model is a generalization of the ones in \cite{gozzi_marinelli_2004}, \cite{gozzi_marinelli_savin_2006} to the case of controlled diffusion. We recall that in \cite{defeo_federico_swiech} the case with no delays in the control (i.e. $p_1=0$) was treated. 
 
 The model for the dynamics of the stock of advertising goodwill $y(t)$ of the product is given by the following controlled SDDE
\begin{equation*}
\begin{cases}
dy(t) = \ds \left[a_0 y(t)+b_0 u(t )+ \int_{-d}^0 a_1(\xi)y(t+\xi)\,d\xi +
            \int_{-d}^0p_1(\xi)u(t+\xi)\,d\xi\right]dt +\left[  \sigma_0 + \gamma_0 u(t) \right] dW(t), \quad t \geq 0, \\[10pt]
y(0)=\eta_0, \quad y(\xi)=\eta_1(\xi),\;
u(\xi)=\delta(\xi),\;\; \xi\in[-d,0),
\end{cases}
\end{equation*}
where $d>0$, the control process $u(t)$ models the intensity of advertising spending and $W$ is a real-valued Brownian motion. Moreover
\begin{enumerate}[(i)]
\item $a_0 \leq 0$ is a constant factor of image deterioration in absence of advertising;
\item   $b_0 \geq 0$ is a constant advertising effectiveness factor;
\item $a_1 \leq 0$ is a given deterministic function satisfying the assumptions used in the previous sections and represent the distribution of the forgetting time;
\item $ p_1 \geq 0$ is a given deterministic function satisfying the assumptions used in the previous sections and it is the density function of the time lag between the advertising expenditure and the corresponding effect on the goodwill level; 
\item $\sigma_0>0$ is   a fixed uncertainty level in the market;
\item $\gamma_0> 0$ is a constant uncertainty factor which multiplies the advertising spending; 
\item $\eta_0 \in \mathbb R$ is the level of goodwill at the beginning of the advertising campaign;
\item $\eta_1 \in L^2([-d,0];\mathbb R)$ is the history of the goodwill level.
\item $\delta \in L^2([-d,0]; U)$ is the history of the advertising spending.
\end{enumerate}
Again, we use the same setup of the stochastic optimal control problem as the one in Section \ref{sec:formul} and the control set $U$ is here
$U= [0,\bar u]$
for some $\bar u>0$.
The optimization problem is 
$$
\inf_{u \in \mathcal U}\mathbb{E} \left[\int_0^\infty e^{-\rho s} l(y(s),u(s)) d s\right],
$$
where  $\rho >0$ is a discount factor, $l(x,u)=h(u)-g(x)$, with  a continuous and convex cost function $h \colon U \rightarrow \mathbb R$ and a continuous and concave utility function $g \colon \mathbb R \rightarrow \mathbb R$ which satisfies Assumption \ref{hp:cost}. 

We are then  in the setting of Section \ref{sec:B_continuity}. Therefore we can use Theorem \ref{th:existence_uniqueness_viscosity_infinite} to characterize the value function $V$ as the unique viscosity solution to \eqref{eq:HJB}.

\subsection{Optimal investment models with time-to-build}
The following model is inspired by  \cite[p. 36]{fabbri_federico}. See also, e.g., \cite{bambi_fabbri_gozzi (2012), bambi_digirolami_federico_gozzi (2017)} for similar models in the deterministic setting.

Let us consider a state process $y(t),$ representing the  stock capital of a certain enterprise at time $t$, and a control process $u(t) \geq 0$, representing the investment undertaken at time $t$ to increase $y$. We assume that the dynamics of $y(t)$  is given by the following SDDE
\begin{small}
\begin{equation*}
\begin{cases}
d y(t)= \left [b_0 (u(t))+ \int_{-d}^0 p_1(\xi) u (t+\xi) d \xi  \right ] d t+\sigma_0( u(t) )d W(t), \quad t \geq 0, \\[10pt]
y(0)=\eta_0, \quad
u(\xi)=\delta(\xi),\;\; \xi\in[-d,0),
\end{cases}
\end{equation*}
\end{small}
where 
\begin{enumerate}[(i)]
\item $b_0 \colon U \to [0,\infty)$ is a continuous bounded function representing an instantaneous effect of the investment on the capital;
\item $ p_1 \geq 0$ is a given deterministic function satisfying the assumptions used in the previous sections and representing the density function of the time-to-build between the investment and the corresponding effect on the stock capital;
\item $\sigma_0 \colon U \to [0,\infty)$ is a a continuous bounded function representing the uncertainty of achievement of the investment plans. 
\item $\eta_0 \in \mathbb R$ is the initial level of the capital;
\item $\delta \in L^2([-d,0];U)$ is the history of the investment spending.
\end{enumerate}
Again, we use the same setup of the stochastic optimal control problem of Section \ref{sec:formul} and the control set $U$ here is
$U= [0,\bar u]$
for some $\bar u>0$.
The goal is to maximize, over all $u (\cdot) \in \mathcal U,$ the expected integral of the discounted future profit flow in the form
$$
\mathbb{E}\left[\int_0^{\infty} e^{-\rho t}( F(y(t))-C(u(t))) d t\right],
$$
where $F: \mathbb{R} \rightarrow \mathbb{R}$ is a production function and $C: U \rightarrow \mathbb{R}$ is a cost function. We assume that $F,C$ satisfy Assumption \ref{hp:cost}.  The optimization problem is equivalent to minimize, over all $u (\cdot) \in \mathcal U,$
$$
\mathbb{E} \left[\int_0^\infty e^{-\rho t} l(y(t),u(t)) d t\right],
$$
where   $l(x,u):=C(u)- F(x) $. We are then  in the setting of Section \ref{sec:formul} (with $a_1=0$). Therefore we can use Theorem \ref{th:existence_uniqueness_viscosity_infinite} to characterize the value function $V$ as the unique viscosity solution to \eqref{eq:HJB}.

\begin{flushleft}
\textbf{Acknowledgments.} The author is very grateful to Andrzej Święch for many useful conversations related to the content of the present paper. He also thanks the two anonymous Referees for their careful reading of the manuscript and their useful comments and suggestions.
\end{flushleft}
\begin{flushleft}
\textbf{Conflict of interest statement.} The author has no competing interests to declare that are relevant to the content of this article.

\textbf{Funding.} This research was partially financed by the INdAM (Instituto Nazionale di Alta Matematica F. Severi) - GNAMPA (Gruppo Nazionale per l'Analisi Matematica, la Probabilità e le loro Applicazioni) Project “Riduzione del modello in sistemi dinamici stocastici in dimensione infinita a due scale temporali” and by
the Italian Ministry of University and Research (MIUR), within PRIN project 2017FKHBA8 001 (The
Time-Space Evolution of Economic Activities: Mathematical Models and Empirical Applications). 
\end{flushleft}
 
\bibliographystyle{amsplain}

\begin{thebibliography}{99}




\bibitem{bambi_fabbri_gozzi (2012)}
M. Bambi, G. Fabbri, F. Gozzi, \textit{Optimal policy and consumption smoothing effects in the time-to-build AK model.} Econom. Theory 50 (2012), no. 3, 635–669.

\bibitem{bambi_digirolami_federico_gozzi (2017)}
M. Bambi, C. Di Girolami, S. Federico, F. Gozzi, \textit{Generically distributed investments on
 flexible projects and endogenous growth.} Econom. Theory 63 (2017), no. 2, 521–558.






\bibitem{delfour}
 A. Bensoussan, G. Da Prato, M. C. Delfour, S. K. Mitter, \textit{Representation and Control of Infinite Dimensional Systems.} 2nd edn., Systems and Control: Foundations and Applications (Birkh{\"a}user, Boston, 2007)



\bibitem{biffis_gozzi_prosdocini_2020}
E. Biffis, F. Gozzi, C. Prosdocimi, \textit{Optimal portfolio choice with path dependent labor income: the infinite horizon case.} SIAM J. Control Optim. 58 (2020), no. 4, 1906–1938.


\bibitem{biagini_gozzi_zanella_2022}
S. Biagini, F. Gozzi, M. Zanella, \textit{Robust portfolio choice with sticky wages.} SIAM J. Financial Math. 13 (2022), no. 3, 1004–1039.





\bibitem{carlier_tahraoui_2010}
G. Carlier, R. Tahraoui, \textit{Hamilton-Jacobi-Bellman equations for the optimal control of a state equation with memory.} ESAIM Control Optim. Calc. Var. 16 (2010), no. 3, 744–763.



\bibitem{choj78}
A. Chojnowska-Michalik, \textit{Representation theorem for general stochastic delay equations.} Bull. Acad. Polon. Sci. Sér. Sci. Math. Astronom. Phys. 26 (1978), no. 7, 635–642.



\bibitem{DZ96}
G. Da Prato, J. Zabczyk, \textit{Ergodicity for Infinite-dimensional Systems}. London Mathematical Society Lecture Note Series, vol. 229 (Cambridge University Press, Cambridge, 1996)


\bibitem{DZ14}
G. Da Prato, J. Zabczyk, \textit{Stochastic Equations in Infinite Dimensions}. Encyclopedia of Mathematics and its Applications, vol. 152 (Cambridge University Press, Cambridge, 2014)




\bibitem{defeo_federico_swiech}
F. de Feo, S. Federico, A. Święch, \textit{Optimal control of stochastic delay differential equations and applications to path-dependent financial and economic models.} arXiv preprint, arXiv:2302.08809 (2023). To appear on SIAM J. Control Optim.




\bibitem{deFeoSwiech}
F. de Feo, A . \'{S}wi\k{e}ch, \textit{Optimal control of stochastic delay differential equations: Optimal feedback controls.} arXiv preprint arXiv:2309.05029 (2023) 

\bibitem{defeo_swiech_wessels}
F. de Feo, A. \'{S}wi\k{e}ch, L. Wessels, \textit{Stochastic optimal control in Hilbert spaces: $C^{1,1}$ regularity of the value function and optimal synthesis via viscosity solutions.} arXiv preprint, arXiv:2310.03181 (2023).



\bibitem{dizacinto_federico_gozzi_2011}
M. Di Giacinto, S. Federico, F. Gozzi, \textit{Pension funds with a minimum guarantee: a stochastic control approach.} Finance Stoch. 15 (2011), no. 2, 297–342.

\bibitem{djehiche_gozzi_zanco_zanella_2022}
B. Djehiche, F. Gozzi, G. Zanco, M. Zanella, \textit{ Optimal portfolio choice with path dependent benchmarked labor income: a mean field model.} Stochastic Process. Appl. 145 (2022), 48–85.

  
  \bibitem{engel_nagel}
K. J. Engel, R. Nagel, \textit{One-Parameter Semigroups for Linear Evolution Equations}. Graduate Texts in Mathematics, vol. 194 (Springer, Berlin, 2000)

    \bibitem{fabbri_federico}
G. Fabbri, S. Federico, \textit{On the infinite-dimensional representation of stochastic controlled systems with delayed control in the diffusion term.} Math. Econ. Lett. 2 (2014), no. 3, 33-43.

  
  \bibitem{fgs_book}
  G. Fabbri, F. Gozzi, A. Święch, \textit{Stochastic optimal control in infinite dimension. Dynamic programming and HJB equations. With a contribution by Marco Fuhrman and Gianmario Tessitore.} Probability Theory and Stochastic Modelling, 82. Springer, Cham (2017)
  
  

  


  \bibitem{federico_2011}
S. Federico, \textit{A stochastic control problem with delay arising in a pension fund model.} Finance Stoch. 15 (2011), no. 3, 421–459.





  \bibitem{goldys_1}
S. Federico, B. Goldys, F. Gozzi, \textit{HJB equations for the optimal control of differential equations with delays and state constraints, I: regularity of viscosity solutions.} SIAM J. Control Optim. 48 (2010), no. 8, 4910–4937.




  \bibitem{goldys_2}
S. Federico, B. Goldys, F. Gozzi, \textit{HJB equations for the optimal control of differential equations with delays and state constraints, II: verification and optimal feedbacks.} SIAM J. Control Optim. 49 (2011), no. 6, 2378–2414.







\bibitem{tacconi}
S. Federico, E. Tacconi, \textit{Dynamic programming for optimal control problems with delays in the control variable.} SIAM J. Control Optim. 52 (2014), no. 2, 1203–1236.



\bibitem{tankov_federico}
 S. Federico, P. Tankov, \textit{Finite-dimensional representations for controlled diffusions with delay.} Appl. Math. Optim. 71 (2015), no. 1, 165–194.


\bibitem{flandoli_zanco}
F.~Flandoli, G. Zanco, \textit{An infinite-dimensional approach to path-dependent Kolmogorov's equations.} Ann. Probab. 44 (2016), no. 4, 2643–2693.


\bibitem{fuhrman_tessitore_masiero_2010}
M. Fuhrman, F. Masiero, G. Tessitore, \textit{Stochastic equations with delay: optimal control via BSDEs and regular solutions of Hamilton-Jacobi-Bellman equations.} SIAM J. Control Optim. 48 (2010), no. 7, 4624–4651.


\bibitem{gawarecki}
L. Gawarecki,  V. Mandrekar, \textit{Stochastic differential equations in infinite dimensions: with applications to stochastic partial differential equations.} Springer Science $\&$ Business Media, 2010. 

  
\bibitem{gozzi_marinelli_2004}
F. Gozzi, C. Marinelli, \textit{Stochastic optimal control of delay equations arising in advertising models.} Stochastic partial differential equations and applications—VII, 133–148, Lect. Notes Pure Appl. Math., 245, Chapman $\&$ Hall/CRC, Boca Raton, FL, 2006.

\bibitem{gozzi_marinelli_savin_2006}
F. Gozzi, C. Marinelli, S. Savin, \textit{On controlled linear diffusions with delay in a model of optimal advertising under uncertainty with memory effects.}  J. Optim. Theory Appl. 142 (2009), no. 2, 291–321.


\bibitem{gozzi_masiero_2017}
F. Gozzi, F. Masiero, \textit{Stochastic optimal control with delay in the control I: Solving the HJB equation through partial smoothing.} SIAM J. Control Optim. 55 (2017), no. 5, 2981–3012.
93E20 (35K55 47D07 49L20 60H20)

\bibitem{gozzi_masiero_2017_b}
F. Gozzi, F. Masiero, \textit{Stochastic optimal control with delay in the control II: Verification theorem and optimal feedbacks.} SIAM J. Control Optim. 55 (2017), no. 5, 3013–3038.

\bibitem{gozzi_masiero_2022}
F. Gozzi, F. Masiero, \textit{Errata: Stochastic Optimal Control with Delay in the Control I: Solving the HJB Equation through Partial Smoothing, and Stochastic Optimal Control with Delay in the Control II: Verification Theorem and Optimal Feedbacks.} SIAM J. Control Optim. 59 (2021), no. 4 , 3096-3101 

\bibitem{gozzi_masiero_2022b}
F. Gozzi, F. Masiero, \textit{Stochastic Control Problems with Unbounded Control Operators: solutions through generalized derivatives.} SIAM J. Control Optim. 61 (2023), no. 2, 586-619.


\bibitem{gozzi_masiero_2023}
F. Gozzi, F. Masiero, \textit{Lifting partial smoothing to solve HJB equations and stochastic control problems.} https://arxiv.org/pdf/2107.04305.pdf

  
  
    \bibitem{masiero_2022}
F.  Masiero, G. Tessitore, \textit{Partial smoothing of delay transition semigroups acting on special functions.} J. Differential Equations 316 (2022), 599–640.
  
  
\bibitem{mohammed_1984}
S. E. A. Mohammed, \textit{Stochastic Functional Differential Equations}. Pitman Research Notes in Mathematics Series, vol. 99 (Longman, Boston, 1984)


\bibitem{mohammed_1998}
 S. E. A. Mohammed, \textit{Stochastic differential systems with memory: theory, examples and applications}. Stochastic Analysis and Related Topics VI, ed. by L. Decreusefond, J. Gjerde, B. {\O}ksendal, A.S. {\"U}st{\"u}nel. Progress in Probability, vol. 42 (Birkh{\"a}user, Boston, 1998), pp. 1-77

\bibitem{nerlove_arrow}
M. Nerlove, J. K. Arrow, \textit{Optimal advertising policy under dynamic conditions}. Economica 29 (1962), no. 114,  129–142 



\bibitem{pang_2019}
T. Pang, Tao,  Y. Yong, \textit{A new stochastic model for stock price with delay effects.}  Proceedings of the Conference on Control and its Applications (pp. 110-117). Society for Industrial and Applied Mathematics (2019)



\bibitem{revuz}
D. Revuz, M. Yor, \textit{Continuous Martingales and Brownian Motion}. 3rd edition, Grundlehren der Mathematischen Wissenschaften, vol. 293 (Springer, Berlin, 1999)




 


\bibitem{VK}
R. B. Vinter, R. H. Kwong, \textit{The infinite time quadratic control problem for linear systems with state and control delays: an evolution equation approach.} SIAM J. Control Optim. 19 (1981), no. 1, 139–153.




\bibitem{yong_zhou}
J. Yong, X. Y. Zhou, \textit{Stochastic Controls, Hamiltonian Systems and HJB Equations}. Applications of Mathematics, vol. 43 (Springer, New York, 1999)





\end{thebibliography}

\end{document}